\newtheorem{theorem}{Theorem}[section]
\newtheorem{lemma}[theorem]{Lemma}
\newtheorem{proposition}[theorem]{Proposition}
\newtheorem{corollary}[theorem]{Corollary}
\newtheorem{definition}[theorem]{Definition}
\newtheorem{example}[theorem]{Example}
\title{Twisted conjugacy in braid groups}
\author{ {\bf Juan Gonz\'alez-Meneses} \\ $ $\\ {\em Dep. \'Algebra, Universidad de Sevilla, Apdo. 1160} \\ {\em 41080
Sevilla, Spain} \\ {\em e-mail:} meneses@us.es \\ $ $ \\
{\bf Enric Ventura}\\ $ $\\ {\em Dept. Mat. Apl. III, Universitat Polit\`ecnica de Catalunya,}\\ {\em Manresa,
Barcelona, Catalunya} \\ {\em e-mail:} enric.ventura@upc.edu}
\date{\today}
\begin{document}

\maketitle

\begin{abstract}
In this note we solve the twisted conjugacy problem for braid groups, i.e. we propose an algorithm which, given two
braids $u,v\in B_n$ and an automorphism $\varphi \in Aut (B_n)$, decides whether $v=(\varphi (x))^{-1}ux$ for some
$x\in B_n$. As a corollary, we deduce that each group of the form $B_n \rtimes H$, a semidirect product of the braid
group $B_n$ by a torsion-free hyperbolic group $H$, has solvable conjugacy problem.
\end{abstract}

\noindent Key words: Braid group, twisted conjugacy. MSC: 20F36, 20F10.

\section{Introduction}

Let $G$ be a group, and $\varphi \in Aut(G)$ an automorphism (which we shall write on the left of the argument,
$g\mapsto \varphi(g)$). We say that two elements $u, v\in G$ are \emph{$\varphi$-twisted conjugated}, denoted
$u\sim_{\varphi} v$, if there exists $x\in G$ such that $v=(\varphi (x))^{-1}ux$. It is straightforward to see that
$\sim_{\varphi}$ is an equivalence relation on $G$, which coincides with standard conjugation in the case $\varphi =Id$
(we shall use the symbol $\sim$ instead of $\sim_{Id}$). Reidemeister was the first author considering this concept
(see~\cite{Reid}), which has an important role in modern Nielsen fixed point theory.

As one might expect, in general, twisted conjugacy classes are much more complicated to understand than standard
conjugacy classes in a group $G$. For instance, algorithmic recognition of them already presents big differences. The
\emph{twisted conjugacy problem} for a group $G$ consists on finding an algorithm which, given an automorphism $\varphi
\in Aut(G)$ and two elements $u,v\in G$, decides whether $u\sim_{\varphi} v$ or not. While the conjugacy problem (i.e.
the $Id$-twisted conjugacy problem) is very easy for free groups, both conceptually and computationally, the twisted
conjugacy problem is solvable but much harder in both senses, see Theorem~1.5 in~\cite{BMMV}.

Of course, a positive solution to the twisted conjugacy problem automatically gives a solution to the (standard)
conjugacy problem, which in turn provides a solution to the word problem. The existence of a finitely presented group
$G$ with solvable word problem but unsolvable conjugacy problem is well known (see~\cite{M1}). In the same direction,
there exists a finitely presented group with solvable conjugacy problem, but unsolvable twisted conjugacy problem (see
Corollary~4.9 in~\cite{BMV}).

A subgroup $A\leqslant Aut(G)$ is said to be \emph{orbit decidable} if there is an algorithm which, given two elements
$u,v\in G$ as input, decides whether one can be mapped to the other up to conjugacy, by some automorphism in $A$, i.e.
whether $v\sim \alpha(u)$ for some $\alpha \in A$ (see~\cite{BMV} for more details). For example, the conjugacy problem
in $G$ coincides precisely with the orbit decidability of the trivial subgroup $\{Id\}\leqslant Aut(G)$.

Let
 $$
1\longrightarrow F\stackrel{\alpha}{\longrightarrow} G\stackrel{\beta}{\longrightarrow} H\longrightarrow 1.
 $$
be a short exact sequence of groups. Since $\alpha(F)$ is normal in $G$, for every $g\in G$, the right conjugation
$\gamma_g$ of $G$ induces an automorphism of $F$, $x\mapsto g^{-1}xg$, which will be denoted $\varphi_g \in Aut(F)$
(note that, in general, $\varphi_g$ does not belong to $Inn(F)$). It is clear that the set of all such automorphisms,
 $$
A_{G} =\{ \varphi_g \mid g\in G\},
 $$
forms a subgroup of $Aut(F)$ containing $Inn(F)$. We shall refer to it as the \emph{action subgroup} of the given short
exact sequence.

Such a sequence is said to be \emph{algorithmic} provided it is given along with algorithms: (i) to compute in the
groups $F$, $G$ and $H$ (i.e. multiply and invert elements), and compute images under $\alpha$ and $\beta$; (ii) to
compute one pre-image in $G$ of any given element in $H$; and (iii) to compute pre-images in $F$ of elements in $G$
mapping to the trivial element in $H$. The typical example (though not the unique one) of an algorithmic short exact
sequence occurs when groups are given by finite presentations and maps are given by images of generators. In fact, (i)
is immediate, we can use the positive part of the membership problem for $\beta(G)$ in $H$ to compute pre-images in $G$
of elements in $H$, and use the positive part of the membership problem for $\alpha(F)$ in $G$ to compute pre-images in
$F$ of elements in $G$ mapping to $1_H$ (see Section~2 in~\cite{BMV}).

Assuming certain conditions on the groups $F$ and $H$, the main result in~\cite{BMV} establishes the following
characterization of the solvability of the conjugacy problem for $G$, in terms of the orbit decidability for the
corresponding action subgroup.

\begin{theorem}[Bogopolski, Martino, Ventura~\cite{BMV}]\label{ses}
Let
 $$
1\longrightarrow F\stackrel{\alpha}{\longrightarrow} G\stackrel{\beta}{\longrightarrow} H\longrightarrow 1.
 $$
be an algorithmic short exact sequence of groups such that
\begin{itemize}
\item[(i)] $F$ has solvable twisted conjugacy problem,
\item[(ii)] $H$ has solvable conjugacy problem, and
\item[(iii)] for every $1\neq h\in H$, the subgroup $\langle h\rangle$ has finite index in its centralizer $C_H(h)$, and
there is an algorithm which computes a finite set of coset representatives, $z_{h,1},\ldots ,z_{h,t_h}\in H$,
 $$
C_H(h)=\langle h \rangle z_{h,1}\sqcup \cdots \sqcup \langle h \rangle z_{h,t_h}.
 $$
\end{itemize}
Then, the conjugacy problem for $G$ is solvable if and only if the action subgroup $A_G \leqslant Aut(F)$ is orbit
decidable.
\end{theorem}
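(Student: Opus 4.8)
The plan is to prove the two implications separately. One of them, ``$G$ has solvable conjugacy problem $\Longrightarrow A_G$ is orbit decidable'', is almost immediate: given $u,v\in F$, I claim that $v\sim\psi(u)$ in $F$ for some $\psi\in A_G$ precisely when $\alpha(u)\sim\alpha(v)$ in $G$, which the conjugacy algorithm for $G$ decides. Indeed, from $v=f^{-1}\varphi_g(u)f$ (with $f\in F$, $g\in G$) one gets $\alpha(v)=(g\alpha(f))^{-1}\alpha(u)(g\alpha(f))$; conversely, $\alpha(v)=h^{-1}\alpha(u)h$ gives $\alpha(v)=\alpha(\varphi_h(u))$, hence $v=\varphi_h(u)$ with $\varphi_h\in A_G$ by injectivity of $\alpha$. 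No hypothesis other than solvability of the conjugacy problem in $G$ enters here, so all the work is in the converse.

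For the converse, assume $A_G$ is orbit decidable and let $u,v\in G$. First use (ii) to test whether $\beta(u)\sim\beta(v)$ in $H$; if not, $u\not\sim v$ in $G$. If so, compute (the solvability of the conjugacy problem always lets one also produce a conjugating element, by enumeration, since the word problem is solvable) some $h_0\in H$ with $h_0^{-1}\beta(u)h_0=\beta(v)$, lift it to $g_0\in G$ using the algorithmicity of the sequence, and replace $u$ by $g_0^{-1}ug_0$; this preserves the conjugacy class of $u$ and achieves $\beta(u)=\beta(v)=:\bar v$. Any $g$ conjugating $u$ to $v$ now satisfies $\beta(g)\in C_H(\bar v)$. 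If $\bar v=1$, then $u=\alpha(U)$ and $v=\alpha(V)$ for computable $U,V\in F$ (algorithmicity, (iii)), and $g^{-1}ug=v$ is equivalent to $\varphi_g(U)=V$; so $u\sim v$ in $G$ iff $V$ lies in the $A_G$-orbit of $U$, and since $Inn(F)\leqslant A_G$ that orbit is a union of $F$-conjugacy classes, so this is exactly a query to the orbit decidability algorithm for $A_G$.

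The remaining case, $\bar v\neq 1$, is the heart of the matter and the place where hypothesis (iii) is used. It provides computable $z_1,\dots,z_t$ with $C_H(\bar v)=\langle\bar v\rangle z_1\sqcup\cdots\sqcup\langle\bar v\rangle z_t$, which we lift to $w_1,\dots,w_t\in G$. A conjugator $g$ with $\beta(g)\in\langle\bar v\rangle z_i$ can be written $g=g'w_i$ with $\beta(g')\in\langle\bar v\rangle$, turning $g^{-1}ug=v$ into $g'^{-1}ug'=w_ivw_i^{-1}$; since $z_i$ centralizes $\bar v$, both $u$ and $w_ivw_i^{-1}$ lie in the coset $\alpha(F)v$, say $u=\alpha(S)v$ and $w_ivw_i^{-1}=\alpha(S_i)v$ with $S,S_i\in F$ computable via (iii). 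The key claim is then: for $S,T\in F$, some $g'\in\beta^{-1}(\langle\bar v\rangle)$ satisfies $g'^{-1}(\alpha(S)v)g'=\alpha(T)v$ if and only if $S\sim_{\varphi_v}T$, where $\varphi_v\in Aut(F)$ is the automorphism induced by conjugation by $v$. This is verified by direct computation: conjugating $\alpha(S)v$ by $\alpha(f)$ changes the ``$F$-part'' $S$ into $f^{-1}S\varphi_v^{-1}(f)=\varphi_v(x)^{-1}Sx$ with $x=\varphi_v^{-1}(f)$, which sweeps out the whole $\varphi_v$-twisted conjugacy class of $S$ as $f$ ranges over $F$; and conjugating by $v$ changes $S$ into $\varphi_v(S)$, which is $\varphi_v$-twisted conjugate to $S$ because $a\sim_{\varphi}\varphi(a)$ for every automorphism $\varphi$ and every $a$ (take $x=a^{-1}$ in the definition), so the unbounded powers $v^k$ produce nothing new. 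Hence $u\sim v$ in $G$ iff $S\sim_{\varphi_v}S_i$ for some $i\in\{1,\dots,t\}$ — finitely many instances of the twisted conjugacy problem in $F$, decidable by (i) (one feeds the algorithm the values of $\varphi_v$ on a generating set of $F$, computed through the algorithmicity of the sequence).

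Thus the only genuine obstacle is the case $\beta(u)=\beta(v)=\bar v\neq 1$: one has to cut down the conjugators to those with $\beta$-image in the cyclic group $\langle\bar v\rangle$, up to the finitely many representatives supplied by (iii), and then see that this collapses to finitely many twisted conjugacy questions in $F$ — the infinitely many powers of $v$ being absorbed by the $\varphi_v$-invariance of $\varphi_v$-twisted conjugacy classes. Everything else is routine manipulation inside the short exact sequence, using the algorithmicity hypotheses to move elements between $F$, $G$ and $H$ and to compute pre-images.
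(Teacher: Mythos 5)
This theorem is imported from~\cite{BMV} and the paper contains no proof of it, so the only comparison available is with the argument in that reference. Your proof is correct and is essentially that argument: the easy implication via the observation that $v\sim\psi(u)$ for some $\psi\in A_G$ iff $\alpha(u)\sim\alpha(v)$ in $G$, and the converse by pushing down to $H$, conjugating so that $\beta(u)=\beta(v)=\bar v$, and then splitting into the case $\bar v=1$ (one query to orbit decidability, legitimate because $Inn(F)\leqslant A_G$) and the case $\bar v\neq 1$ (hypothesis (iii) cuts the conjugators down to finitely many cosets of $\langle\bar v\rangle$, and the powers of $v$ are absorbed because $a\sim_{\varphi}\varphi(a)$, leaving finitely many twisted-conjugacy queries in $F$).
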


Many groups satisfy conditions (ii) and (iii); for example, they are easily verified for a finitely generated free
group, and with a bit more work, they can also be proven for torsion-free hyperbolic groups, see Proposition~4.11
in~\cite{BMV}.

On the other hand, solvability of the twisted conjugacy problem is a stronger condition on $F$. In this sense, the
introduction of~\cite{BMV} contains the following comment: ``\emph{In light of Theorem~\ref{ses}, it becomes
interesting, first, to collect groups $F$ where the twisted conjugacy problem can be solved. And then, for every such
group $F$, to study the property of orbit decidability for subgroups of $Aut(F)$: every orbit decidable (undecidable)
subgroup of $Aut(F)$ will correspond to extensions of $F$ having solvable (unsolvable) conjugacy problem}".

The goal of the present paper is to contribute a new result into this direction, taking as a base group the braid
group, $F=B_n$.

Consider the braid group on $n$ strands, given by the classical presentation
\begin{equation}\label{E:presentation}
B_n =\left\langle \sigma_1,\sigma_2,\ldots ,\sigma_{n-1} \left\vert \begin{array}{ll} \sigma_i \sigma_j =\sigma_j
\sigma_i & |i-j|> 1 \\ \sigma_i \sigma_{j} \sigma_i =\sigma_{j} \sigma_i \sigma_{j} & |i-j|=1
\end{array} \right. \right\rangle
\end{equation}

It is well known that the conjugacy problem is solvable in $B_n$. The first, non-efficient solution was given by
Garside~\cite{Garside}. It was subsequently improved in~\cite{EM,Franco_GM,Birman_Ko_Lee,Gebhardt,Gebhardt_GM}, in such
a way that the current solution is very efficient in most cases.

\begin{theorem}[Garside~\cite{Garside}]\label{cpbn}
The conjugacy problem is solvable in $B_n$.
\end{theorem}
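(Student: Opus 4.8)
The plan is to invoke Garside's combinatorial theory of normal forms in $B_n$, which I would set up as follows. First I would recall the Garside structure on $B_n$: the monoid $B_n^+$ of positive braids embeds in $B_n$, and the half-twist $\Delta = \sigma_1(\sigma_2\sigma_1)\cdots(\sigma_{n-1}\cdots\sigma_1)$ is a Garside element whose (left and right) divisors in $B_n^+$ are the $n!$ \emph{permutation braids}. The divisibility order on $B_n^+$ is a lattice, and this yields, for every $\beta\in B_n$, a unique \emph{left normal form} $\beta=\Delta^{p}s_1s_2\cdots s_r$ with $p\in\mathbb Z$, each $s_i$ a proper non-trivial permutation braid, and consecutive pairs $s_is_{i+1}$ left-weighted. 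This normal form is algorithmically computable starting from any word in the $\sigma_i^{\pm1}$, so in particular the word problem is solvable. The relevant numerical invariants are the \emph{infimum} $\inf(\beta)=p$ and the \emph{canonical length} $r$ (equivalently $\sup(\beta)=p+r$).

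Next I would pass from $\beta$ to its \emph{summit set} in the sense of Garside — or, following the later refinements, to the \emph{super summit set} $SSS(\beta)$, consisting of the conjugates of $\beta$ with maximal infimum and minimal canonical length among all conjugates of $\beta$. The three facts to establish are: (a) $SSS(\beta)$ is non-empty and finite, finiteness holding because an element with bounded infimum and supremum is a product of boundedly many permutation braids and there are only $n!$ of these; (b) $SSS(\beta)$ is effectively computable, by first driving $\beta$ into its super summit set via iterated \emph{cycling} and \emph{decycling} operations (which raise the infimum or lower the supremum until both are optimal, in a bounded number of steps), and then closing up under conjugation by permutation braids to recover the whole set; (c) two braids $u,v$ are conjugate in $B_n$ if and only if $SSS(u)=SSS(v)$, equivalently if and only if some chosen element of $SSS(u)$ lies in $SSS(v)$. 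Granting (a)--(c), the algorithm is immediate: compute $SSS(u)$ and $SSS(v)$, pick any representative of $SSS(u)$, and test membership in the finite set $SSS(v)$, declaring $u$ and $v$ conjugate exactly when the test succeeds.

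The main obstacle is part (b): proving that cycling and decycling genuinely terminate at the super summit set — i.e. that the infimum is maximised and the supremum minimised after finitely many such moves — and that the set $SSS(\beta)$ is ``connected'' under conjugation by permutation braids so that it can be enumerated exhaustively from a single element. Both points rest on the lattice structure of the divisibility order on $B_n^+$ and on careful control of how normal forms change under conjugation; this is precisely the part that the works cited above sharpen in order to obtain a practically efficient procedure. The remaining ingredients — existence and uniqueness of normal forms, finiteness of the summit set, and the conjugacy criterion (c) — are then comparatively routine consequences of the Garside framework.
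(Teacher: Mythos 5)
Your proposal is correct and is precisely the standard Garside/El-Rifai--Morton argument that the paper relies on: the paper gives no proof of this theorem, citing Garside's original summit-set solution and its refinements (\cite{EM}, etc.), and your sketch of normal forms, the super summit set $SSS(\beta)$, cycling/decycling, and connectedness under conjugation by simple elements is exactly the content of those references. The one point to flag is that you correctly isolate the genuinely hard step --- termination of cycling/decycling and the convexity of $SSS(\beta)$ under conjugation by permutation braids --- which is the El-Rifai--Morton theorem the paper later quotes as Theorem~\ref{T:EM}.
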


Also, the automorphism group of $B_n$ is quite well understood. Among other results, the following one will be crucial
for our argumentation.

\begin{theorem}[Dyer, Grossman~\cite{DG}]\label{out}
Let $B_n$ be the braid group on $n$ strands. Then $|Out(B_n)|=2$. More precisely, $Aut(B_n)=Inn(B_n)\sqcup
Inn(B_n)\cdot \varepsilon$, where $\varepsilon \colon B_n \to B_n$ is the automorphism which inverts each generator,
$\sigma_i \mapsto \sigma_i^{-1}$.
\end{theorem}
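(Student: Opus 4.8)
The plan is to reduce the statement to a claim about automorphisms acting trivially on the abelianization, and then to recover the Artin generators, up to conjugacy, one at a time. \emph{Step 1 (reduction via the abelianization).} Since any two generators $\sigma_i,\sigma_j$ are conjugate in $B_n$, the abelianization $B_n^{ab}$ is infinite cyclic, generated by the common image of all the $\sigma_i$. Hence every $\varphi\in Aut(B_n)$ induces on $\mathbb{Z}\cong B_n^{ab}$ multiplication by some $e(\varphi)\in\{+1,-1\}$, and $\varphi\mapsto e(\varphi)$ is a homomorphism $Aut(B_n)\to\{\pm 1\}$. It kills $Inn(B_n)$, since conjugation is trivial on $B_n^{ab}$, and sends $\varepsilon$ to $-1$; in particular $\varepsilon\notin Inn(B_n)$, so $|Out(B_n)|\ge 2$. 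It therefore suffices to prove that every $\varphi$ with $e(\varphi)=+1$ is inner: this gives $\ker(e)=Inn(B_n)$, whence $Out(B_n)\cong\{\pm 1\}$ and $Aut(B_n)=Inn(B_n)\sqcup Inn(B_n)\cdot\varepsilon$. For $n\le 2$ the statement is immediate ($B_1$ is trivial and $B_2\cong\mathbb{Z}$), so from now on assume $n\ge 3$ and fix $\varphi$ with $e(\varphi)=+1$.

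\emph{Step 2 (the conjugacy class of the generators is preserved).} This is the heart of the matter: I claim $\varphi(\sigma_1)$ is conjugate to $\sigma_1$. One route is algebraic: characterize the conjugacy class of $\sigma_1$ among elements of exponent sum $1$ through the isomorphism type of its centralizer, using that $C_{B_n}(\sigma_1)\cong\langle\sigma_1\rangle\times B_{n-1}$ (the $B_{n-1}$ factor carried by the ``fat strand'' $\{1,2\}$ together with the strands $3,\dots,n$), and showing these invariants leave room for no other class. A more transparent route is geometric: the center $Z(B_n)=\langle\Delta^2\rangle$ is characteristic, so $\varphi$ descends to $B_n/Z(B_n)$, which for $n\ge 3$ embeds into the mapping class group of the $(n+1)$-punctured sphere (with $B_n\cong MCG(D_n)$ the mapping class group of the $n$-punctured disk). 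Since $\sigma_i^2$ is the Dehn twist about a curve enclosing exactly two punctures, $\varphi(\sigma_i)^2$ is again such a Dehn twist; curve-complex rigidity (Ivanov, and Korkmaz in the sphere case) then forces the induced simplicial self-map of the curve complex to be realized by a homeomorphism, so $\varphi(\sigma_i)$ is conjugate to $\sigma_i^{\pm1}$ with one and the same sign for all $i$, and $e(\varphi)=+1$ fixes that sign to be $+1$. (A few small values of $n$ may have to be checked directly, since these rigidity statements have low-complexity exceptions.)

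\emph{Step 3 (rigidifying the remaining generators).} Writing $\varphi(\sigma_1)=g\sigma_1g^{-1}$ and replacing $\varphi$ by $x\mapsto g^{-1}\varphi(x)g$ — which changes neither the value of $e$ nor whether $\varphi$ is inner — we may assume $\varphi(\sigma_1)=\sigma_1$. Then $\varphi(\sigma_2)$ is conjugate to $\sigma_2$, has exponent sum $1$, braid-relates with $\sigma_1=\varphi(\sigma_1)$, and commutes with $\varphi(\sigma_3),\dots,\varphi(\sigma_{n-1})$. Analyzing which conjugates of $\sigma_2$ braid-relate with $\sigma_1$ (in the surface picture: half-twists on an arc joining punctures $2$ and $3$ meeting the arc of $\sigma_1$ minimally) shows they form a single orbit under conjugation by $\langle\sigma_1\rangle$, so a further conjugation by a power of $\sigma_1$ (which fixes $\sigma_1$) yields $\varphi(\sigma_2)=\sigma_2$. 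Iterating, at stage $k$ conjugating by an element centralizing $\sigma_1,\dots,\sigma_{k-1}$, we arrange $\varphi(\sigma_i)=\sigma_i$ for all $i$, hence $\varphi=Id$; undoing the conjugations shows the original $\varphi$ was inner.

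\emph{Where the difficulty lies.} Steps 1 and 3 are essentially bookkeeping once Step 2 is available; the genuine obstacle is Step 2. Every elementary feature one can record for $\sigma_1$ — acting as $+1$ on $B_n^{ab}$, mapping to a transposition in the symmetric group, and so on — is shared by far too many elements, so recognizing its conjugacy class requires either a delicate combinatorial exploitation of the braid relations or the input of curve-complex / mapping-class-group rigidity, and this is the step I expect to absorb most of the effort.
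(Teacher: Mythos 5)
This statement is not proved in the paper at all: it is quoted from Dyer--Grossman~\cite{DG} and used as a black box, so there is no internal argument to compare yours against. Judged on its own terms, your proposal is an outline of a known strategy rather than a proof, and the genuine gap sits exactly where you say it does: Step~2 is asserted, not established. In the geometric route, the sentence ``$\varphi(\sigma_i)^2$ is again such a Dehn twist'' is precisely the hard content. Before curve-complex rigidity can be invoked you must show that $\varphi$ permutes the set of (powers of) Dehn twists about curves surrounding exactly two punctures, which requires an algebraic characterization of these elements (via the rank and structure of their centralizers, or of the maximal abelian subgroups containing them) that is visibly preserved by automorphisms; none of that is supplied. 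The centralizer route has the same status: the computation $C_{B_n}(\sigma_1)\cong\langle\sigma_1\rangle\times B_{n-1}$ is fine, but ``showing these invariants leave room for no other class'' is essentially the whole theorem, since a priori there could be exotic elements of exponent sum $1$ with an isomorphic centralizer. The low-complexity exceptions you flag in parentheses also need separate treatment, not a promissory note. So as written the argument does not close.

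Step~3 also contains an incorrect assertion: the conjugates of $\sigma_2$ that braid-relate with $\sigma_1$ do \emph{not} form a single orbit under conjugation by $\langle\sigma_1\rangle$. Such a conjugate is a half-twist about an arc sharing exactly one endpoint with the arc of $\sigma_1$ and otherwise disjoint from it, and that arc may terminate at any other puncture (e.g.\ a half-twist joining puncture $2$ to puncture $5$ braid-relates with $\sigma_1$ but is not $\langle\sigma_1\rangle$-conjugate to $\sigma_2$); you must also exploit the commutation relations with the images of $\sigma_3,\ldots,\sigma_{n-1}$, which you list but never use, and run the induction on the whole chain of generators simultaneously. Step~1 is correct and standard. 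For context: the original Dyer--Grossman argument is quite different from your sketch --- it is purely algebraic, working with characteristic subgroups of $B_n$ such as the pure braid group and data extracted from its lower central series --- whereas completing your Step~2 along Ivanov--Korkmaz lines would reproduce the later geometric proofs. Either way, the present write-up identifies the difficulty accurately but does not resolve it.
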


Using the above two results, we will solve the twisted conjugacy problem in $B_n$, and the orbit decidability problem
for every subgroup $A\leqslant Aut(B_n)$. As a consequence, we deduce that the conjugacy problem is solvable in certain
extensions of $B_n$.

\textbf{Theorem~\ref{tcp}.} \emph{The twisted conjugacy problem is solvable in the braid group $B_n$.}

\textbf{Theorem~\ref{od}.} \emph{Every finitely generated subgroup $A\leqslant Aut(B_n)$ is orbit decidable.}

\textbf{Theorem~\ref{cp}.} \emph{Let $G=B_n \rtimes H$ be an extension of the braid group $B_n$ by a finitely generated
group $H$ satisfying conditions (ii) and (iii) above (for instance, take $H$ torsion-free hyperbolic). Then, $G$ has
solvable conjugacy problem.}

The structure of the paper is as follows. In Section~2 we review some known facts about normal forms for braids that
will be used later. In Section~3 we determine a well defined finite subset of each $\varepsilon$-twisted conjugacy
class in $B_n$. And in Section~4 we give an algorithm to construct such set from a given element in the class, solving
the twisted conjugacy problem in $B_n$. Finally, in Section~5 we solve the orbit decidability problem for subgroups of
$Aut(B_n)$ and conclude Theorem~\ref{cp}.

\section{Normal forms of braids}

In this section we will recall the notion of normal form for braids, as explained in~\cite[Chapter 9]{Thurston} and
\cite{EM}, and we shall also provide some technical lemmas that will be used to prove our main results.

In the braid group $B_n$, an element is called {\bf positive} if it can be written as a product of non-negative powers
of the generators $\sigma_1,\ldots,\sigma_{n-1}$. It turns out that if we regard the standard presentation of the braid
group~(\ref{E:presentation}) as a monoid presentation, it yields a monoid $B_n^+$ which embeds in $B_n$, and is
precisely the submonoid of positive braids~\cite{Garside}. This means that two positive words represent the same braid
if and only if one can be obtained from the other by a finite sequence of the following operations: Either replacing a
subword $\sigma_i\sigma_j$ by $\sigma_j\sigma_i$ for $|i-j|>1$, or replacing a subword $\sigma_i\sigma_j\sigma_i$ by
$\sigma_j\sigma_i\sigma_j$ for $|i-j|=1$.

There is a partial order $\preccurlyeq$ on the elements of $B_n$, called the {\bf prefix order}, defined by
$a\preccurlyeq b$ if and only if $a^{-1}b$ is positive. If $a$ and $b$ are positive this means that $b$ can be written
as a positive word in which $a$ appears as a prefix. There is also a {\bf suffix order}, $\succcurlyeq$, defined by
$a\succcurlyeq b$ if and only if $ab^{-1}$ is positive.  These orders are known to be lattice orders, meaning that for
every $a,b\in B_n$ there is a unique greatest common divisor $a\wedge b$ (resp. $a\wedge_R b$) and a unique least
common multiple $a\vee b$ (resp. $a\vee_R b$) with respect to $\preccurlyeq$ (resp. $\succcurlyeq$).

The order $\preccurlyeq$ is, by definition, invariant under left-multiplication. That is, $a\preccurlyeq b
\Leftrightarrow ca\preccurlyeq cb$ for all $a,b,c\in B_n$. This implies that $cx\wedge cy = c(x\wedge y)$ and $cx\vee
cy = c(x\vee y)$ for all $c,x,y\in B_n$. Similarly, $\succcurlyeq $ is invariant under right-multiplication, and one
has $xc\wedge_R yc = (x\wedge_R y)c$ and $xc\vee_R yc = (x\vee_R y)c$ for all $c,x,y\in B_n$.

The braid group $B_n$ has a special element called {\bf Garside element} or \mbox{\bf half twist},
 $$
\Delta=\sigma_1(\sigma_2\sigma_1)(\sigma_3\sigma_2\sigma_1) \cdots (\sigma_{n-1}\cdots \sigma_1).
 $$
Conjugation by $\Delta$ preserves $\preccurlyeq$ and $\succcurlyeq$. We denote by $\tau$ the inner automorphism of
$B_n$ defined by $\Delta$, that is, $\tau(x)=\Delta^{-1}x\Delta$ for all $x\in B_n$. We recall that the center of $B_n$
is infinite cyclic, generated by $\Delta^2$. Hence $\tau$ preserves $\preccurlyeq$ and $\succcurlyeq$ (thus it
preserves $\wedge$, $\vee$, $\wedge_R$ and $\vee_R$), and $\tau^2=\mbox{id}$.

The set of positive prefixes of $\Delta$, denoted $[1,\Delta]=\{s\in B_n; \; 1\preccurlyeq s\preccurlyeq \Delta\}$, is
called the set of {\bf simple elements} of $B_n$. This set is finite, namely it has $n!$ elements. Simple elements are
the building blocks that conform the usual normal forms of braids. A simple element will be said to be {\bf proper} if
it is  neither $1$ nor $\Delta$.

The {\bf right complement} $\partial(s)$ of a simple element $s$ is a simple element $t$ such that $st=\Delta$, that
is, $\partial(s)=s^{-1}\Delta$. The map $\partial$ is a bijection of the set of simple elements. Moreover,
$\partial^2=\tau$. The {\bf left complement} of a simple element is precisely $\partial^{-1}(s)=\Delta s^{-1}$. If a
positive element is written as a product of two simple elements $s_1s_2$, we say that such a decomposition is {\bf left
weighted} if $s_1$ is the maximal simple prefix of $s_1s_2$, that is $s_1s_2\wedge \Delta = s_1$, or alternatively
(multiplying from the left by $s_1^{-1}$), if $s_2 \wedge \partial(s_1)=1$.   We say that the decomposition $s_1s_2$ is
{\bf right weighted} if $s_2$ is the maximal simple suffix of $s_1s_2$, that is $s_1s_2\wedge_R \Delta = s_2$, or
alternatively (multiplying from the right by $s_2^{-1}$), if $s_1 \wedge_R \partial^{-1}(s_2)=1$.

Given an element $x\in B_n$, we say that a decomposition $x=\Delta^p x_1\cdots x_r$ is the {\bf left normal form} of
$x$ if $p$ is the maximal integer such that $\Delta^{-p}x$ is positive, each $x_i$ is a proper simple element, and
$x_ix_{i+1}$ is left weighted for $i=1,\ldots,r-1$.  We say that a decomposition $x=x_1'\cdots x_r' \Delta^p$ is the
{\bf right normal form} of $x$ if $p$ is the maximal integer such that $x\Delta^{-p}$ is positive, each $x_i'$ is a
proper simple element, and $x_i'x_{i+1}'$ is right weighted for $i=1,\ldots,r-1$.  The left and right normal forms are
unique decompositions, and the numbers $p$ and $r$ are determined by $x$ and do not depend on the normal form (left or
right) which is used to define them. In this way, one defines the {\bf infimum}, {\bf supremum} and {\bf canonical
length} of $x$ as, respectively, $\inf(x)=p$, $\sup(x)=p+r$ and $\ell(x)=r$.

It will be convenient for our purposes to use the following notation. When we deal with a positive element $x$, and we
say that its left normal form is $x=x_1\cdots x_r$, (with no power of $\Delta$ on the left), we are allowing some of
the initial factors to be equal to $\Delta$. That is, if $\inf(x)=p>0$, this will mean that $x_1=\cdots =x_p=\Delta$,
so the actual normal form of $x$ would be $\Delta^p x_{p+1}\cdots x_r$.

There is still another normal form that we shall use. It is well known~\cite{Thurston,Charney} that, for every $x\in
B_n$ there exist unique positive elements $u$ and $v$, with $u\wedge v=1$, such that $x=u^{-1}v$. If the left normal
forms of $u$ and $v$ are, respectively, $u=u_1\cdots u_r$ and $v=v_1\cdots v_s$, the {\bf mixed normal form} of $x$ is
defined to be $x=u_r^{-1}\cdots u_1^{-1} v_1\cdots v_s$.  We recall from~\cite{Thurston} that, if $x$ can be written as
$x=u^{-1}v$ with $u$ and $v$ positive elements with left normal forms $u_1\cdots u_r$ and $v_1\cdots v_s$, then
$u_r^{-1}\cdots u_1^{-1}v_1\cdots v_s$ is the mixed normal form of $x$ if and only if $u_1\wedge v_1=1$.

We remark~\cite{Thurston} that if $x=u_r^{-1}\cdots u_1^{-1}v_1\cdots v_s $ is in mixed normal form as above, the left
normal form of $u^{-1}$ is $\Delta^{-r}u_r'\cdots u_1'$ (where $u_i'=\partial^{-2i-1}(u_i)$), and the left normal form
of $x$ is equal to $x=\Delta^{-r} u_r'\cdots u_1' v_1\cdots v_s$. Therefore, from the mixed normal form one can already
obtain $\inf(x)=-r$, $\ell(x)=r+s$ and hence $\sup(x)=s$.

The following technical results will be used later.

\begin{lemma}\label{L:cancelling}
Let $a$ and $b$ be positive braids whose left normal forms are $a= a_1\cdots a_r$, $b=b_1\cdots b_s$, and whose right
normal forms are $a=a_1'\cdots a_r'$, $b=b_1'\cdots b_s'$. Consider $x=a^{-1}b$. If $\ell(x)\leq r+s-2k+1$ for some
integer $k > 0$, then either $a_1'\cdots a_{k}'\preccurlyeq b_1\cdots b_{k}$ or $b_1'\cdots b_{k}'\preccurlyeq
a_1\cdots a_{k}$.
\end{lemma}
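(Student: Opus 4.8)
The plan is to write $x = a^{-1}b$ in mixed normal form and compare it with the given factorizations of $a$ and $b$. Recall from the discussion of mixed normal forms: there exist unique positive $u,v$ with $u\wedge v=1$ and $x=u^{-1}v$; if $u=u_1\cdots u_p$ and $v=v_1\cdots v_q$ are the left normal forms, then $\inf(x)=-p$, $\sup(x)=q$, hence $\ell(x)=p+q$. Since $x=a^{-1}b$ with $a,b$ positive and $a\wedge b$ possibly nontrivial, we have $a = c\,u$ and $b = c\,v$ where $c = a\wedge b$ (here I use that $x = u^{-1}v = (cu)^{-1}(cv)$ and that $u\wedge v = 1$ characterizes the mixed normal form). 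Writing $m$ for the canonical length of $c$, so that $\ell(a) = r$ relates to $m$ and $p$, the hypothesis $\ell(x) = p+q \le r+s-2k+1$ will be converted into a statement forcing $c$ to be long: roughly, $m \gtrsim k$. This is the arithmetic heart of the argument and the first step I would carry out carefully, keeping track of how canonical lengths add when one multiplies braids whose product has a given normal form.

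The second step is to extract a common prefix (or suffix) of length $k$ from the factor $c$. Since $c = a\wedge b$ divides both $a$ and $b$ on the left, and $c$ has canonical length at least (roughly) $k$, I want to say that the first $k$ simple factors of $c$ — call their product $d$ with $\ell(d)=k$ — satisfy $d \preccurlyeq a$ and $d \preccurlyeq b$. The subtlety is that the left normal form of $c$ need not be an ``initial segment'' of the left normal forms $a_1\cdots a_r$ or $b_1\cdots b_s$: left-normal-form factors do not behave well under taking left divisors in general. This is where the right normal forms in the statement enter. The key observation I expect to need is a standard comparison between left divisibility and right normal form: if $d \preccurlyeq a$ with $\ell(d) = k$, then $d \preccurlyeq a_1'\cdots a_k'$ is false in general, but rather $a_1'\cdots a_k'$ is the \emph{largest} left-weighted-from-the-right initial chunk, so that any left divisor of $a$ of canonical length $\le k$ is itself a prefix of... — more precisely, I would use that the product of the first $k$ factors of the \emph{right} normal form of $a$ is the unique maximal element among left divisors $z$ of $a$ with $\ell(z)\le k$, or something equivalent. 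Then $d\preccurlyeq a$ and $\ell(d)\le k$ give $d \preccurlyeq a_1'\cdots a_k'$; symmetrically $d\preccurlyeq b_1'\cdots b_k'$.

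The third step combines these: from $d\preccurlyeq a_1'\cdots a_k'$, $d\preccurlyeq b_1'\cdots b_k'$, and from the fact that $d$ is also (a prefix of) $c = a\wedge b$ so it left-divides both $a_1\cdots a_r$ and $b_1\cdots b_s$, I want to conclude the disjunction $a_1'\cdots a_k' \preccurlyeq b_1\cdots b_k$ or $b_1'\cdots b_k' \preccurlyeq a_1\cdots a_k$. The dichotomy should come from comparing the canonical lengths of the two ``$k$-chunks'': if $\ell(c) \ge$ the threshold, then one of $a_1'\cdots a_k'$, $b_1'\cdots b_k'$ is forced to be a prefix of $c$ itself (because $c$ is longer than $k$ and left-divides both $a$ and $b$), hence a prefix of both $a$ and $b$, hence in particular of $a_1\cdots a_k$ or $b_1\cdots b_k$ respectively. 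Deciding \emph{which} of the two holds is exactly the case split in the conclusion, and I'd expect it to fall out of which of $a_1'\cdots a_k'$, $b_1'\cdots b_k'$ is the shorter/divides-the-other along $c$.

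The main obstacle I anticipate is the interplay between left and right normal forms under taking gcd's: I need the precise lemma identifying the first $k$ right-normal-form factors of a positive braid $a$ with the maximal left divisor of $a$ of canonical length $k$, together with the dual statement, and I need the length bookkeeping for $c = a\wedge b$ to be tight enough that $\ell(x)\le r+s-2k+1$ really does force $\ell(c)\ge k$ on one side. The inequality being $r+s-2k+1$ rather than $r+s-2k$ (an off-by-one slack) strongly suggests that the clean bound $\ell(c)\ge k$ fails for \emph{both} $a$ and $b$ simultaneously, and that is precisely why the conclusion is a disjunction rather than a conjunction — the ``$+1$'' is the amount by which the two sides can fail to both be long, so at least one of them is long enough, giving one of the two alternatives.
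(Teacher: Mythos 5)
Your overall skeleton is the paper's: set $c=a\wedge b$, write $a=c\alpha$, $b=c\beta$, use the mixed normal form to get $\ell(x)=\sup(\alpha)+\sup(\beta)$, and then play the left normal forms of $a,b$ against their right normal forms. But the two steps you flag as the ``heart'' of the argument are both aimed at the wrong statements, and as written the proof does not go through. First, the arithmetic should not be used to show that $c$ is long: from $\sup(\alpha)+\sup(\beta)+2k-1\leq r+s=\sup(a)+\sup(b)$ one should instead extract the disjunction $\sup(\alpha)\leq r-k$ \emph{or} $\sup(\beta)\leq s-k$ (this is where the ``$+1$'' slack is spent, exactly as you guessed). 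The conclusion $\ell(c)\geq k$ is strictly weaker and is not enough for the next step, because canonical lengths are only subadditive: knowing $c$ is long tells you nothing about how short $\alpha$ is.

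Second, the divisibility principle you invoke is false. The maximal left divisor of $a$ of canonical length $\leq k$ is $a_1\cdots a_k$ (the first $k$ factors of the \emph{left} normal form, namely $a\wedge\Delta^k$), not $a_1'\cdots a_k'$; and the inference ``$c\preccurlyeq a$ and $\ell(c)\geq k$ imply $a_1'\cdots a_k'\preccurlyeq c$'' fails already for $k=1$, $a=\sigma_1\sigma_2\in B_3$ (a simple element, so $a_1'=\sigma_1\sigma_2$) and $c=\sigma_1$. The correct fact, and the one the proof needs, is dual: $a_{k+1}'\cdots a_r'=a\wedge_R\Delta^{r-k}$ is the maximal \emph{right} divisor of $a$ of canonical length $\leq r-k$. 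So in the case $\sup(\alpha)\leq r-k$, the right divisor $\alpha$ of $a=c\alpha$ must right-divide $a_{k+1}'\cdots a_r'$, which forces $a_1'\cdots a_k'\preccurlyeq c$; then $a_1'\cdots a_k'\preccurlyeq c\beta=b$, and since $a_1'\cdots a_k'\preccurlyeq\Delta^k$ and $b_1\cdots b_k=b\wedge\Delta^k$, you get $a_1'\cdots a_k'\preccurlyeq b_1\cdots b_k$. The case $\sup(\beta)\leq s-k$ is symmetric and yields the other alternative. Note also that your intermediate goal $d\preccurlyeq a_1'\cdots a_k'$ points the inequality the wrong way for the desired conclusion. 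With the arithmetic retargeted at $\alpha,\beta$ and the right-normal-form fact stated correctly, your plan becomes exactly the paper's proof.
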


\begin{proof}
Let $d=a\wedge b$, and write $a=d\alpha$ and $b=d\beta$. Then $x=a^{-1}b=\alpha^{-1}\beta$, where $\alpha$ and $\beta$
are positive elements such that $\alpha\wedge \beta=1$. Hence $\sup(\alpha)+\sup(\beta)+2k-1 = \ell(x)+2k-1 \leq
r+s=\sup(a)+\sup(b)$. This implies that either $\sup(\alpha)+k\leq \sup(a)$ or $\sup(\beta)+k\leq \sup(b)$.

Suppose that $\sup(\alpha)+k\leq \sup(a)=r$. This means that $\alpha$ can be written as a product of at most $r-k$
simple elements. But $d\alpha = a = a_1'\cdots a_r'$, where the latter decomposition is in right normal form. It
follows that $\alpha$ must be a suffix of $a_{k+1}'\cdots a_r'$, and then $a_1'\cdots a_{k}'\preccurlyeq d$. Hence
$a_1'\cdots a_{k}'\preccurlyeq d\beta=b= b_1\cdots b_s$. Since the latter decomposition is in left normal form, one
finally obtains $a_1'\cdots a_{k}'\preccurlyeq b_1\cdots b_{k}$. In the case $\sup(\beta)+k \leq \sup(b)=s$, one can
apply the above reasoning to $\beta$ and $b$, to obtain $b_1'\cdots b_{k}'\preccurlyeq a_1\cdots a_{k}$.
\end{proof}

Let us denote by $\varepsilon$ the automorphism of $B_n$ that sends $\sigma_i$ to $\sigma_i^{-1}$ for $i=1,\ldots,n-1$.
Also, let $\mbox{rev}: \: B_n \rightarrow B_n$ be the anti-automorphism that sends each $\sigma_i$ to itself, that is,
it sends a braid represented by a word $w$, to the braid represented by the same word written backwards. We will write,
for every $x\in B_n$, $\mbox{rev}(x)=\overleftarrow{x}$.  Let us also denote $\mbox{inv}:\: B_n \rightarrow B_n$ the
anti-automorphism $\mbox{inv}(x)=x^{-1}$.  Notice that the composition of any two of the maps in $\{\varepsilon,
\mbox{rev}, \mbox{inv}\}$, in any order, yields the third one.

\begin{lemma}\label{L:sup_and_palindromic}
Let $x$ be a positive braid with $\sup(x)=r+k$, where $r\geq k\geq 1$. If $\ell(\varepsilon(x)x)\leq 2r+1$ then there
exist positive braids $a$ and $b$ such that $x=\overleftarrow{a}ba$ and $\sup(b)\leq r$.
\end{lemma}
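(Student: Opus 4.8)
\emph{Proof strategy.} The starting point is that, by the remark at the end of Section~2, $\varepsilon=\mbox{inv}\circ\mbox{rev}$, so $\varepsilon(x)=\overleftarrow{x}^{\,-1}$ and hence $\varepsilon(x)x=\overleftarrow{x}^{\,-1}x$. This is exactly the shape of expression to which Lemma~\ref{L:cancelling} applies, with $a=\overleftarrow{x}$ and $b=x$. Before applying it I would record the duality between $\mbox{rev}$ and the two normal forms: $\mbox{rev}$ is a positivity-preserving anti-automorphism fixing $\Delta$, so it sends proper simple elements to proper simple elements and interchanges $\preccurlyeq$ with $\succcurlyeq$; consequently it turns a left normal form into a right normal form and conversely, and preserves $\inf$, $\sup$ and $\ell$. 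In particular, if the left and right normal forms of $x$ are $x=x_1\cdots x_N$ and $x=x_1'\cdots x_N'$ (with $N=\sup(x)=r+k$, using the convention of Section~2 that leading copies of $\Delta$ are written out), then the left normal form of $\overleftarrow{x}$ is $\overleftarrow{x_N'}\cdots\overleftarrow{x_1'}$ and the right normal form of $\overleftarrow{x}$ is $\overleftarrow{x_N}\cdots\overleftarrow{x_1}$.

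Next I would apply Lemma~\ref{L:cancelling} to $a=\overleftarrow{x}$ and $b=x$ with the integer $k$ from the present statement. Here $\sup(a)=\sup(b)=N=r+k$, and the hypothesis $\ell(\varepsilon(x)x)=\ell(\overleftarrow{x}^{\,-1}x)\le 2r+1=\sup(a)+\sup(b)-2k+1$ is precisely the inequality required by that lemma. Rewriting its conclusion through the formulas above, and abbreviating $u=x_{N-k+1}\cdots x_N$ and $u'=x_{N-k+1}'\cdots x_N'$, it becomes the dichotomy
$$\overleftarrow{u}\ \preccurlyeq\ x_1\cdots x_k \qquad\mbox{ or }\qquad x_1'\cdots x_k'\ \preccurlyeq\ \overleftarrow{u'}.$$

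In the first case I set $a:=u=x_{N-k+1}\cdots x_N$ and $b:=\overleftarrow{a}^{\,-1}x\,a^{-1}$; cancelling the last $k$ simple factors of the left normal form of $x$ gives $b=\overleftarrow{a}^{\,-1}(x_1\cdots x_{N-k})$, and since $k\le r=N-k$ we have $\overleftarrow{a}\preccurlyeq x_1\cdots x_k\preccurlyeq x_1\cdots x_{N-k}$, so $b$ is positive. Moreover $\sup(b)\le\sup(x_1\cdots x_{N-k})\le N-k=r$, using the standard facts that $\sup(w^{-1}z)\le\sup(z)$ for positive $w,z$ and that a product of $m$ simple elements has supremum at most $m$; and by construction $x=\overleftarrow{a}\,b\,a$. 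In the second case I set $a:=\overleftarrow{x_1'\cdots x_k'}$ (so $\overleftarrow{a}=x_1'\cdots x_k'$) and again $b:=\overleftarrow{a}^{\,-1}x\,a^{-1}=(x_{k+1}'\cdots x_N')a^{-1}$; applying $\mbox{rev}$ to the second alternative shows that $a$ is a suffix of $u'$, hence of $x_{k+1}'\cdots x_N'$ (again because $k\le N-k$), so $b$ is positive, with $\sup(b)\le\sup(x_{k+1}'\cdots x_N')\le N-k=r$ by the right-handed mirror of the same inequality, and once more $x=\overleftarrow{a}\,b\,a$. This completes the argument in both cases.

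The real content is the reduction to Lemma~\ref{L:cancelling}; the part that needs care — and the main potential source of error — is the orientation bookkeeping: keeping track of which of the left/right normal forms and of the prefix/suffix orders is in play at each step, and correctly transporting the conclusion of Lemma~\ref{L:cancelling} across $\mbox{rev}$ and across the cancellations that produce $b$. The auxiliary facts invoked ($\overleftarrow{\Delta}=\Delta$, that $\mbox{rev}$ interchanges the two normal forms, and subadditivity of $\sup$) are all standard Garside-theoretic statements of the kind recalled in Section~2.
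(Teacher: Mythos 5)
Your proposal is correct and follows essentially the same route as the paper: apply Lemma~\ref{L:cancelling} to $a=\overleftarrow{x}$, $b=x$ (using that $\mbox{rev}$ swaps left and right normal forms), and in each branch of the resulting dichotomy peel off the last, respectively first, $k$ simple factors to obtain the decomposition $x=\overleftarrow{a}ba$ with $\sup(b)\leq r$. The only cosmetic difference is that you justify $\sup(b)\leq r$ via subadditivity of $\sup$, while the paper observes directly that $b$ is a suffix (resp.\ prefix) of a product of $r$ simple factors; both are standard and equivalent.
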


\begin{proof}
Let $x_1\cdots x_{r+k}$ be the left normal form of $x$ and let $y_1\cdots y_{r+k}$ be its right normal form. Hence
$\overleftarrow{x_{r+k}} \cdots \overleftarrow{x_1}$ is the right normal form of $\overleftarrow{x}$ and
$\overleftarrow{y_{r+k}}\cdots \overleftarrow{y_1}$ is its left normal form. Notice that $\varepsilon(x)x=
(\overleftarrow{x})^{-1}x$. Hence,  if $\ell(\varepsilon(x)x)=\ell((\overleftarrow{x})^{-1}x)\leq 2r+1$,
Lemma~\ref{L:cancelling} tells us that either $\overleftarrow{x_{r+k}}\cdots \overleftarrow{x_{r+1}}\preccurlyeq
x_1\cdots x_k$ or $y_1\cdots y_k\preccurlyeq \overleftarrow{y_{r+k}}\cdots \overleftarrow{y_{r+1}}$.

Suppose that $\overleftarrow{x_{r+k}}\cdots \overleftarrow{x_{r+1}}\preccurlyeq x_1\cdots x_k$, and write $x_1\cdots
x_k=\overleftarrow{x_{r+k}}\cdots \overleftarrow{x_{r+1}}\alpha$ for some positive $\alpha$. Since $r\geq k$, one can
then write
$$
x=\left(\overleftarrow{x_{r+k}}\cdots \overleftarrow{x_{r+1}}\right)\alpha x_{k+1}\cdots x_r (x_{r+1}\cdots x_{r+k}),
$$
so the result follows in this case taking $a=x_{r+1}\cdots x_{r+k}$ and $b=\alpha x_{k+1}\cdots x_{r}$ (if $k=r$ then
$b=\alpha$).  Notice that $\sup(b)\leq r$ as $b$ is a suffix of $x_1\cdots x_r$.

Now suppose that $y_1\cdots y_k\preccurlyeq \overleftarrow{y_{r+k}}\cdots \overleftarrow{y_{r+1}}$, and write
$\overleftarrow{y_{r+k}}\cdots \overleftarrow{y_{r+1}} = y_1\cdots y_k\beta$ for some positive $\beta$, which is
equivalent to $y_{r+1}\cdots y_{r+k}=\overleftarrow{\beta} \overleftarrow{y_k}\cdots \overleftarrow{y_1}$. Then $x =
(y_1\cdots y_k) y_{k+1}\cdots y_{r} \overleftarrow{\beta} \left( \overleftarrow{y_k}\cdots \overleftarrow{y_1}
\right)$. Taking $a=\overleftarrow{y_k}\cdots \overleftarrow{y_1}$ and $b=y_{k+1}\cdots y_{r} \overleftarrow{\beta}$,
which is a prefix of $y_{k+1}\cdots y_{k+r}$, the result follows also in this case.
\end{proof}

We define a {\it palindromic-free} braid as a positive braid $x$ that cannot be decomposed as $x=\overleftarrow{a}ba$
for positive braids $a$ and $b$, where $a$ is nontrivial (see the equivalent definition~\ref{D:palindromic_free}).
Palindromic-free braids will be crucial to show our main results. The above Lemma implies the following.

\begin{corollary}\label{C:pal-free_double_length}
Let $u$ be a positive braid with $\ell(x)=m$. Then $\ell(\varepsilon(x)x)\leq 2m$. If moreover $u$ is palindromic-free
and $m>1$, then $\ell(\varepsilon(x)x)=2m$.
\end{corollary}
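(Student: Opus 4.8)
The plan is to deduce both assertions directly from Lemma~\ref{L:sup_and_palindromic}, together with the general upper bound on canonical length under multiplication. For the first inequality, recall that multiplying two elements can never increase the canonical length by more than the sum: since $\varepsilon(x) = (\overleftarrow{x})^{-1}$ and $\ell(\overleftarrow{x}) = \ell(x^{-1}) = \ell(x) = m$ (because $\mathrm{rev}$ and $\mathrm{inv}$ send left normal forms to right normal forms of the same canonical length), we have $\ell(\varepsilon(x)x) \le \ell(\varepsilon(x)) + \ell(x) = 2m$. This is the easy half and needs only the standard fact that $\ell(ab) \le \ell(a) + \ell(b)$.

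For the second assertion I would argue by contradiction. Suppose $x$ is palindromic-free with $m > 1$ but $\ell(\varepsilon(x)x) < 2m$, i.e. $\ell(\varepsilon(x)x) \le 2m - 1$. I want to invoke Lemma~\ref{L:sup_and_palindromic} with a suitable choice of $r$ and $k$. Since $x$ is positive, $\inf(x) \ge 0$, so $\sup(x) = \inf(x) + \ell(x) \ge m$; write $\sup(x) = r + k$ where $k = \inf(x)$ if $\inf(x) \ge 1$ — but more robustly I would just pick $r$ and $k$ with $r + k = \sup(x)$, $r \ge k \ge 1$, and $2r + 1 \ge \ell(\varepsilon(x)x)$. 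The delicate point is choosing these so that $r \le m - 1$, which is what forces the eventual contradiction via the bound $\sup(b) \le r$. Concretely, if $\inf(x) = p$ then $\sup(x) = p + m$; taking $k = \max(p,1)$ and $r = \sup(x) - k$ gives $r = p + m - \max(p,1) \le m - 1$ (using $m \ge 1$), and $2r + 1 \ge 2(m-1) + 1 = 2m - 1 \ge \ell(\varepsilon(x)x)$, while $r \ge k$ needs $p + m - \max(p,1) \ge \max(p,1)$, i.e. roughly $m \ge p$ or $m$ large enough — this constraint is the one subtlety to nail down, and may require treating the case $\inf(x) = 0$ separately (where $k = 1$, $r = m - 1 \ge 1$ since $m > 1$, and $r \ge k$ reads $m - 1 \ge 1$, fine).

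Once Lemma~\ref{L:sup_and_palindromic} applies, it produces positive braids $a, b$ with $x = \overleftarrow{a}\,b\,a$ and $\sup(b) \le r \le m - 1$. The remaining task is to show $a$ is nontrivial, contradicting palindromic-freeness of $x$. If $a = 1$ then $x = b$, so $m = \ell(x) = \ell(b) \le \sup(b) \le m - 1$, a contradiction. Hence $a \ne 1$, and $x = \overleftarrow{a}\,b\,a$ with $a$ nontrivial is exactly a forbidden decomposition, contradicting the hypothesis that $x$ is palindromic-free. Therefore $\ell(\varepsilon(x)x) \ge 2m$, and combined with the first part, $\ell(\varepsilon(x)x) = 2m$.

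The main obstacle I anticipate is purely bookkeeping: verifying that one can always choose the pair $(r,k)$ meeting all four requirements ($r + k = \sup(x)$, $r \ge k \ge 1$, $2r+1 \ge \ell(\varepsilon(x)x)$, and $r \le m-1$) regardless of the value of $\inf(x)$, and in particular handling the boundary behaviour when $\inf(x)$ is large relative to $m$. (I also note the statement as written says ``Let $u$ be a positive braid with $\ell(x)=m$'' — there is a clash between $u$ and $x$; I would read it uniformly as $x$ throughout.) None of this involves real difficulty beyond careful case analysis on $\inf(x)$.
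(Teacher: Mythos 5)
Your argument follows the paper's own proof almost exactly: the upper bound $\ell(\varepsilon(x)x)\le 2m$ comes from $\ell(\varepsilon(x))=\ell\bigl((\overleftarrow{x})^{-1}\bigr)=\ell(x)=m$ together with subadditivity of canonical length, and the equality for palindromic-free $x$ with $m>1$ comes from applying Lemma~\ref{L:sup_and_palindromic} with $r=m-1$, $k=1$ to a hypothetical $\ell(\varepsilon(x)x)\le 2m-1$ and contradicting palindromic-freeness. Your explicit check that the resulting $a$ is nontrivial (via $\sup(b)\le m-1<m\le\sup(x)$) is a detail the paper leaves implicit, and it is correct.

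The one loose end is exactly the one you flag, and your attempted repair does not close it. The Lemma requires $\sup(x)=r+k$, so the choice $(r,k)=(m-1,1)$ needs $\sup(x)=m$, i.e.\ $\inf(x)=0$. For $\inf(x)=p\ge 1$ your choice $k=\max(p,1)=p$, $r=\sup(x)-k$ gives $r=p+m-p=m$, so the asserted bound $r\le m-1$ fails and the conclusion $\sup(b)\le r=m$ contradicts nothing; moreover, when $p$ is large relative to $m$ no admissible pair $(r,k)$ with $r+k=p+m$, $r\ge k\ge 1$ and $r\le m-1$ exists at all, so this case cannot be rescued by better bookkeeping. The missing ingredient is not a cleverer choice of $(r,k)$ but the observation, stated in Section~3 of the paper, that a positive palindromic-free braid has infimum zero (if $\Delta\preccurlyeq x$ one can peel off a generator from both ends); hence $\sup(x)=\ell(x)=m$, the case $\inf(x)>0$ is vacuous, and the single choice $(r,k)=(m-1,1)$ always applies. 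Once you add that one fact, your proof coincides with the paper's. (You are also right that the statement's $u$ should be read as $x$ throughout.)
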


\begin{proof}
Recall that $\varepsilon(x)= (\overleftarrow{x})^{-1}$. Since the canonical length of a braid is preserved under
reversing (by symmetry of the relations in $B_n$) and also under taking inverses (by~\cite{EM}), it follows that
$\ell(\varepsilon(x))=m$. Multiplying two braids of canonical length $m$ yields a braid of canonical length at most
$2m$, hence $\ell(\varepsilon(x)x)\leq 2m$.

If $u$ is palindromic-free and $m>1$ we have the equality, as if we had $\ell(\varepsilon(x)x)< 2m$, then by setting
$r=m-1$ and $k=1$, we would have $\ell(\varepsilon(x)x)\leq 2r+1$, which by Lemma~\ref{L:sup_and_palindromic} implies
that $x$ is not palindromic-free.
\end{proof}

\section{$\varepsilon$-twisted conjugacy and palindromic-free braids}

Due to Theorem~\ref{out}, the twisted conjugacy problem in $B_n$ will easily reduce to the $\varepsilon$-twisted
conjugacy problem, namely given two braids $u,v\in B_n$ decide whether there exists another one $w\in B_n$ such that
 $$
v=(\varepsilon (w))^{-1}uw.
 $$
This problem has a very particular nature because $(\varepsilon (w))^{-1}=\overleftarrow{w}$, i.e.
$\varepsilon$-twisted conjugating $u$ by $w$ amounts to multiply $u$ on the right by $w$ and on the left by
$\overleftarrow{w}$, $v=\overleftarrow{w}uw$. Let us concentrate on this case, where the twisting is given by
$\varepsilon$.

Note that the $\varepsilon$-twisted conjugation of a positive braid by a positive braid yields a positive braid. Also,
note that, for any braid $x\in B_n$ and any generator $\sigma_i$, $x$ and $\sigma_i x\sigma_i$ are
$\varepsilon$-twisted conjugated. Imposing positivity, this yields to the following definition:

\begin{definition}\label{D:palindromic_free}
A positive braid $x$ is said to be {\bf palindromic-free} if $\sigma_i^{-1} x \sigma_i^{-1}$ is not positive, for every
$i=1,\ldots,n-1$.
\end{definition}

In other words, if $x$ is a positive, palindromic-free braid and $\sigma_i\preccurlyeq x$, that is, $x=\sigma_i y$ for
some positive $y$, then $y \not\succcurlyeq \sigma_i$. However, notice that even if $x$ is palindromic-free, one may
have simultaneously $\sigma_i\preccurlyeq x$ and $x\succcurlyeq \sigma_i$ for some $i$. For instance if $x=\sigma_i$,
or if $x=\sigma_i\sigma_j$ with $|i-j|\geqslant 2$.

\begin{proposition}\label{P:MPF_nonempty}
Every braid $x\in B_n$ is $\varepsilon$-twisted conjugated to some positive, palindromic-free braid $y$.
\end{proposition}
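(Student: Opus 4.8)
The plan is to reduce to the case of a positive braid and then use a descent argument based on canonical length, invoking Corollary~\ref{C:pal-free_double_length} and Lemma~\ref{L:sup_and_palindromic} to terminate. First I would observe that multiplying $x$ on the left by a large enough central power $\Delta^{2N}$ makes the result positive (since $\Delta^2$ is central and positive, and $\inf(\Delta^{2N}x) = 2N + \inf(x) \geq 0$ once $2N \geq -\inf(x)$); moreover $\Delta^{2N}x$ is $\varepsilon$-twisted conjugate to $x$, because $\varepsilon(\Delta^{-N})^{-1}\,x\,\Delta^{-N} = \overleftarrow{\Delta^{-N}}\,x\,\Delta^{-N} = \Delta^{-N}x\Delta^{-N} = \Delta^{-2N}x$ — wait, I need the sign right: taking $w = \Delta^{N}$ gives $\overleftarrow{w}\,x\,w = \Delta^{N}x\Delta^{N} = \Delta^{2N}x$ using that $\overleftarrow{\Delta} = \Delta$ (the word for $\Delta$ reversed still represents $\Delta$, by symmetry of the braid relations and the fact that $\Delta$ is a palindrome up to the relations) and that $\Delta^2$ is central. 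So without loss of generality $x$ is positive.

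Second, assuming $x$ is positive, I would argue by induction on $\ell(x)$ (equivalently on $\sup(x)$, since $\inf \geq 0$ and we can track both). If $x$ is already palindromic-free, we are done. Otherwise, by Definition~\ref{D:palindromic_free} there is a generator $\sigma_i$ with $\sigma_i^{-1}x\sigma_i^{-1}$ positive; call this positive braid $x' = \sigma_i^{-1}x\sigma_i^{-1} = \overleftarrow{\sigma_i}^{\,-1}\,x\,\sigma_i^{-1}$, hmm, I want it as $\varepsilon$-twisted conjugation: $x' = \overleftarrow{w}\,x\,w$ with $w = \sigma_i^{-1}$ gives $\overleftarrow{\sigma_i^{-1}}\,x\,\sigma_i^{-1} = \sigma_i^{-1}x\sigma_i^{-1} = x'$, so indeed $x \sim_\varepsilon x'$ and $x'$ is positive with $\ell(x') \leq \ell(x)$, in fact the length strictly drops or at worst stays equal while the total degree (word length) drops by $2$. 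Since the word length in the generators is a non-negative integer that strictly decreases under each such move, the process must terminate, and it terminates precisely at a positive palindromic-free braid. Transitivity of $\sim_\varepsilon$ then gives that the original $x$ is $\varepsilon$-twisted conjugate to a positive palindromic-free braid $y$.

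The main obstacle I anticipate is not in the logic of the descent but in making sure the two equivalent notions of ``palindromic-free'' line up: Definition~\ref{D:palindromic_free} is the local, one-generator condition ($\sigma_i^{-1}x\sigma_i^{-1}$ never positive), whereas the informal definition before Corollary~\ref{C:pal-free_double_length} asks that $x$ admit no decomposition $x = \overleftarrow{a}ba$ with $a$ nontrivial. The descent above terminates at something satisfying the local definition, so for the proposition as stated (which only asserts existence of \emph{some} positive palindromic-free $y$, with ``palindromic-free'' taken in the sense of Definition~\ref{D:palindromic_free}) this is already enough. If one wants to be careful, one direction is trivial — if $x = \overleftarrow{a}ba$ with $a$ nontrivial, writing $a = \sigma_i a''$ shows $x \succcurlyeq \sigma_i$ and $\sigma_i \preccurlyeq x$ in a compatible way that makes $\sigma_i^{-1}x\sigma_i^{-1}$ positive — and the reverse direction follows by iterating the local condition and using that $\overleftarrow{\sigma_i} = \sigma_i$. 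I would state just as much of this equivalence as is needed and relegate the rest to the earlier remark. The termination bound itself is harmless: the word-length measure is manifestly well-founded, so no quantitative estimate is required here, though one could alternatively phrase the descent via $\sup$ and cite Corollary~\ref{C:pal-free_double_length} to see that palindromic-freeness is exactly the obstruction to further shortening.
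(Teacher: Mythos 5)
Your proof is correct and follows essentially the same route as the paper's: $\varepsilon$-twist-conjugate by a suitable power of $\Delta$ (using $\overleftarrow{\Delta}=\Delta$) to reach a positive braid, then repeatedly strip off pairs $x=\sigma_i x'\sigma_i$, with termination guaranteed by the strictly decreasing word length. The side discussion reconciling the two formulations of ``palindromic-free'' is not needed for the statement as given, and is harmless.
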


\begin{proof}
It is well known that for every braid $x\in B_n$, the braid $x \Delta^p $ is positive for $p$ big enough. Since
$\overleftarrow \Delta $ is positive (actually $\overleftarrow \Delta = \Delta$), it follows that
$\overleftarrow{\Delta^p} x \Delta^p$ is positive for some $p$ big enough. Hence $x$ is $\varepsilon$-twisted
conjugated to a positive braid $z$.

If $z$ is not palindromic-free, there will exist a letter $\sigma_i$ such that $z=\sigma_i z' \sigma_i$ for some
positive braid $z'$ whose word length is smaller than that of $z$. And, since $\overleftarrow{\sigma_i} =\sigma_i$, $z$
is $\varepsilon$-twisted conjugated to $z'$. Repeating this process, as the word length of the resulting braid
decreases at each step, one finally obtains a palindromic-free positive braid $\varepsilon$-twisted conjugated to $z$,
thus to $x$.
\end{proof}

By the above argument, every positive braid $x$ has the form $x=\overleftarrow c yc$ for some positive,
palindromic-free braid $y$. We remark that the element $y$ is not unique. For instance, if $x=\sigma_2\sigma_1\sigma_2
= \sigma_1 \sigma_2 \sigma_1$, then $y$ could be equal to either $\sigma_1$ or $\sigma_2$. Another example is
$x=\sigma_3 \sigma_2 \sigma_1 \sigma_2 \sigma_3 = \sigma_3 \sigma_1 \sigma_2 \sigma_1 \sigma_3 = \sigma_1 \sigma_2
\sigma_3 \sigma_2 \sigma_1$, so $y$ could be equal, in this case, to either $\sigma_1$ or $\sigma_2$ or $\sigma_3$.

Recall that we are trying to find an algorithm to solve the $\varepsilon$-twisted conjugacy problem in $B_n$. After the
above discussion, one may think that a possible solution could be to compute the set of positive, palindromic-free
braids, $\varepsilon$-twisted conjugated to a given one. Clearly, two braids $u$ and $v$ are $\varepsilon$-twisted
conjugated if and only if their corresponding sets coincide. Unfortunately, this attempt does not work because the
mentioned set is not always finite, as one can see in the following example.

\begin{example}
The set $\{\sigma_3^n \sigma_2 \sigma_3 \sigma_4 \sigma_5 \sigma_1 \sigma_2 \sigma_3 \sigma_4  \sigma_1^n; \quad n\geq
0\}\subset B_6$ is an infinite family of positive, palindromic-free braids, which are pairwise $\varepsilon$-twisted
conjugated.
\end{example}

\begin{proof}
We will show that for every $n\geq 0$ one has:
 $$
\sigma_5^n\left(\sigma_3^n \sigma_2 \sigma_3 \sigma_4 \sigma_5 \sigma_1 \sigma_2 \sigma_3 \sigma_4
\sigma_1^n\right)\sigma_5^n = (\sigma_1\sigma_3)^n \sigma_2 \sigma_3 \sigma_4 \sigma_5 \sigma_1 \sigma_2 \sigma_3
\sigma_4 (\sigma_3\sigma_1)^n.
 $$
So all braids in the above family are $\varepsilon$-twisted conjugated to $\sigma_2 \sigma_3 \sigma_4 \sigma_5 \sigma_1
\sigma_2 \sigma_3 \sigma_4$, and so to each other. To see this, first notice that
\begin{eqnarray*}
   \sigma_5 \left(\sigma_2 \sigma_3 \sigma_4 \sigma_5 \sigma_1 \sigma_2 \sigma_3 \sigma_4 \right) & = &
   \left(\sigma_2 \sigma_3 \right) \sigma_5 \left(\sigma_4 \sigma_5 \sigma_1 \sigma_2 \sigma_3 \sigma_4\right) \\
  & = &    \left(\sigma_2 \sigma_3 \sigma_4 \sigma_5 \right) \sigma_4 \left(\sigma_1 \sigma_2 \sigma_3 \sigma_4\right) \\
  & = &    \left(\sigma_2 \sigma_3 \sigma_4 \sigma_5 \sigma_1\sigma_2 \right) \sigma_4 \left(\sigma_3 \sigma_4\right) \\
  & = &    \left(\sigma_2 \sigma_3 \sigma_4 \sigma_5 \sigma_1\sigma_2 \sigma_3 \sigma_4 \right) \sigma_3.
\end{eqnarray*}
On the other hand, by commutativity relations,
$$
  \sigma_2 \sigma_3 \sigma_4 \sigma_5 \sigma_1 \sigma_2 \sigma_3 \sigma_4 =
  \sigma_2 \sigma_1 \sigma_3  \sigma_2 \sigma_4 \sigma_3 \sigma_5 \sigma_4,
$$
hence
\begin{eqnarray*}
   \sigma_1 \left(\sigma_2 \sigma_3 \sigma_4 \sigma_5 \sigma_1 \sigma_2 \sigma_3 \sigma_4 \right) & = &
   \sigma_1 \left(\sigma_2 \sigma_1 \sigma_3  \sigma_2 \sigma_4 \sigma_3 \sigma_5 \sigma_4 \right) \\
  & = &    \left(\sigma_2 \sigma_1 \right) \sigma_2 \left(\sigma_3 \sigma_2 \sigma_4 \sigma_3 \sigma_5 \sigma_4\right) \\
  & = &    \left(\sigma_2 \sigma_1 \sigma_3 \sigma_2 \right) \sigma_3 \left(\sigma_4 \sigma_3 \sigma_5 \sigma_4\right) \\
  & = &    \left(\sigma_2 \sigma_1 \sigma_3 \sigma_2 \sigma_4 \sigma_3 \right) \sigma_4 \left(\sigma_5 \sigma_4\right) \\
  & = &    \left(\sigma_2 \sigma_1 \sigma_3 \sigma_2 \sigma_4 \sigma_3 \sigma_5 \sigma_4 \right) \sigma_5   \\
  & = &    \left(  \sigma_2 \sigma_3 \sigma_4 \sigma_5 \sigma_1 \sigma_2 \sigma_3 \sigma_4 \right) \sigma_5.
\end{eqnarray*}
Therefore, as $\sigma_1$, $\sigma_3$ and $\sigma_5$ commute, one has
\begin{eqnarray*}
  \sigma_5^n \sigma_3^n \left(\sigma_2 \sigma_3 \sigma_4 \sigma_5 \sigma_1 \sigma_2 \sigma_3 \sigma_4 \right) \sigma_1^n \sigma_5^n
  & = & \sigma_3^n \left(\sigma_2 \sigma_3 \sigma_4 \sigma_5 \sigma_1 \sigma_2 \sigma_3 \sigma_4 \right)\sigma_3^{n} \sigma_1^n \sigma_5^n \\
  & = & \sigma_1^n \sigma_3^n \left(\sigma_2 \sigma_3 \sigma_4 \sigma_5 \sigma_1 \sigma_2 \sigma_3 \sigma_4 \right)\sigma_3^{n} \sigma_1^n,
\end{eqnarray*}
and the claim follows.

It just remains to show that every element in the above family is palindromic-free. This could be easily done by using
the standard topological representation of braids as collections of strands in $\mathbb R^3$, but we will show it
algebraically.

If $n=0$, we have the braid $\alpha_0=\sigma_2 \sigma_3 \sigma_4 \sigma_5 \sigma_1 \sigma_2 \sigma_3 \sigma_4$. We
recall that the monoid $B_6^+$ of positive braids embeds in $B_6$, so we just need to use positive relations from the
standard presentation~(\ref{E:presentation}) to determine which generators are prefixes or suffixes of $\alpha_0$.  But
notice that in the above word, no matter how many commutativity relations we apply, we can never obtain a subword of
the form $\sigma_i\sigma_j\sigma_i$, because between two appearances of the letter $\sigma_i$ one always has both
$\sigma_{i-1}$ and $\sigma_{i+1}$. Hence, only commutativity relations can be applied, and it follows that this braid
can only start with $\sigma_2$, and can only end with $\sigma_4$, thus it is palindromic-free.

For $n>0$, the braid we are considering is $\alpha_n=\sigma_3^n (\sigma_2 \sigma_3 \sigma_4 \sigma_5 \sigma_1 \sigma_2
\sigma_3 \sigma_4) \sigma_1^n$. Notice that:
\begin{eqnarray*}
   \sigma_3 \left(\sigma_2 \sigma_3 \sigma_4 \sigma_5 \sigma_1 \sigma_2 \sigma_3 \sigma_4 \right) & = &
   \left(\sigma_2 \sigma_3 \right) \sigma_2 \left(\sigma_4 \sigma_5 \sigma_1 \sigma_2 \sigma_3 \sigma_4\right) \\
  & = &    \left(\sigma_2 \sigma_3 \sigma_4 \sigma_5 \right) \sigma_2 \left(\sigma_1 \sigma_2 \sigma_3 \sigma_4\right) \\
  & = &    \left(\sigma_2 \sigma_3 \sigma_4 \sigma_5 \sigma_1\sigma_2 \right) \sigma_1 \left(\sigma_3 \sigma_4\right) \\
  & = &    \left(\sigma_2 \sigma_3 \sigma_4 \sigma_5 \sigma_1\sigma_2 \sigma_3 \sigma_4 \right) \sigma_1.
\end{eqnarray*}
Hence $\alpha_n=\sigma_3^{2n}(\sigma_2 \sigma_3 \sigma_4 \sigma_5 \sigma_1 \sigma_2 \sigma_3 \sigma_4)$, and also
$\alpha_n=(\sigma_2 \sigma_3 \sigma_4 \sigma_5 \sigma_1 \sigma_2 \sigma_3 \sigma_4)\sigma_1^{2n}$.

On one hand, the above two expressions of $\alpha_n$ show that it can start with $\sigma_3$ and also with $\sigma_2$.
Suppose that it can also start with $\sigma_1$. As $\sigma_3^{2n}\preccurlyeq \alpha_n$ and we are assuming that
$\sigma_1\preccurlyeq \alpha_n$, it follows that $\sigma_3^{2n}\vee \sigma_1 \preccurlyeq \alpha_n$, that is
$\sigma_3^{2n}\sigma_1\preccurlyeq \alpha_n$. Multiplying by $\sigma_3^{-2n}$ from the left we obtain
$\sigma_1\preccurlyeq \sigma_2 \sigma_3 \sigma_4 \sigma_5 \sigma_1 \sigma_2 \sigma_3 \sigma_4$. But this is not
possible as the latter braid can only start with $\sigma_2$. Hence $\sigma_1\not\preccurlyeq \alpha_n$. Analogously, as
$\sigma_3^{2n}\vee \sigma_4 = \sigma_3^{2n}\sigma_4 \sigma_3$, and also $\sigma_3^{2n}\vee \sigma_5 =
\sigma_3^{2n}\sigma_5$, it follows that $\sigma_4\not\preccurlyeq \alpha_n$ and $\sigma_5\not\preccurlyeq \alpha_n$.
Therefore $\alpha_n$ can only start with either $\sigma_2$ or $\sigma_3$.

The symmetric argument shows that $\alpha_n$ can only end with either $\sigma_1$ or $\sigma_4$. Therefore $\alpha_n$ is
palindromic-free, as we wanted to show.
\end{proof}

Hence, the attempt to compute the set of all positive, palindromic-free braids, $\varepsilon$-twisted conjugated to a
given one does not work. However, we shall save the idea by imposing a further condition which will assure the required
finiteness of the set: we shall consider only elements with minimal canonical length. The set we will compute is then
the following.

\begin{definition}
Given a braid $x\in B_n$, we define $MPF(x)$ to be the set of positive, palindromic-free braids, $\varepsilon$-twisted
conjugated to $x$, of minimal canonical length.
\end{definition}

Notice that if a positive braid $x$ is palindromic-free, then $\inf(x)=0$, so $\sup(x)=\ell(x)$. This gives us
finiteness of $MPF(x)$:

\begin{proposition}
For every $x\in B_n$, the set $MPF(x)$ is nonempty and finite, and it is an invariant of its $\varepsilon$-twisted
conjugacy class.
\end{proposition}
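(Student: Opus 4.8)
The plan is to establish the three asserted properties—nonemptiness, finiteness, and invariance—separately, as each has a short independent argument building on what precedes.

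First, \emph{nonemptiness}: this is essentially immediate from Proposition~\ref{P:MPF_nonempty}, which already guarantees the existence of at least one positive, palindromic-free braid $\varepsilon$-twisted conjugated to $x$. Among all such braids (a nonempty set), the canonical lengths form a nonempty set of non-negative integers, so there is a minimal one; the braids realizing it form $MPF(x)$, which is therefore nonempty.

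Second, \emph{finiteness}: this is where the ``minimal canonical length'' restriction does the work, and I expect it to be the main point to get right. Let $m$ be the minimal canonical length appearing in $MPF(x)$, and let $y\in MPF(x)$. Since $y$ is positive and palindromic-free, we noted that $\inf(y)=0$, hence $\sup(y)=\inf(y)+\ell(y)=m$. A positive braid $y$ with $\sup(y)=m$ can be written as a product of exactly $m$ simple elements (allowing trivial ones), $y=y_1\cdots y_m$ with each $y_i\in[1,\Delta]$. Since there are only $n!$ simple elements, there are at most $(n!)^m$ positive braids with $\sup(y)\le m$. Thus $MPF(x)$ is contained in a finite set, hence finite. (It is crucial here that $\inf(y)=0$: without the palindromic-free hypothesis one could $\varepsilon$-twist-conjugate by powers of $\Delta$ and get arbitrarily negative infimum while keeping $\ell$ bounded, so canonical length alone would not bound the braid; and without fixing $m$ minimal, the Example shows the set can be infinite.)

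Third, \emph{invariance}: I must show $MPF(x)$ depends only on the $\varepsilon$-twisted conjugacy class of $x$. Suppose $x\sim_\varepsilon x'$. Since $\sim_\varepsilon$ is an equivalence relation, a positive palindromic-free braid $y$ is $\varepsilon$-twisted conjugated to $x$ if and only if it is $\varepsilon$-twisted conjugated to $x'$; hence the set of \emph{all} positive palindromic-free braids in the class is the same whether computed from $x$ or from $x'$, and so is the minimal canonical length among them, and so is the subset $MPF$. Therefore $MPF(x)=MPF(x')$, i.e. $MPF$ is an invariant of the $\varepsilon$-twisted conjugacy class. The only mild subtlety is purely definitional bookkeeping—confirming that ``positive, palindromic-free, $\varepsilon$-twisted conjugated to $x$, of minimal canonical length'' is phrased entirely in terms of the class of $x$—so there is no real obstacle here.

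In summary, the only substantive step is the finiteness bound, and its heart is the observation that palindromic-freeness forces $\inf=0$, which converts the canonical-length bound into an honest bound on the number of simple factors, hence into membership in a finite set.
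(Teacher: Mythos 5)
Your proof is correct and follows essentially the same route as the paper: nonemptiness from Proposition~\ref{P:MPF_nonempty}, invariance by definition, and finiteness from the fact that palindromic-free braids have infimum zero, so bounded canonical length bounds the number of simple factors and hence the number of such braids. Your write-up merely spells out the finiteness count ($(n!)^m$) that the paper leaves implicit.
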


\begin{proof}
$MPF(x)$ is an invariant of the $\varepsilon$-twisted conjugacy class of $x$ by definition. It is nonempty by
Proposition~\ref{P:MPF_nonempty}, and it is finite since the set of elements of infimum zero, and given canonical
length, is finite.
\end{proof}

\section{The twisted conjugacy problem for $B_n$.}

In order to find a solution to the $\varepsilon$-twisted conjugacy problem in braid groups, we need a method to compute
$MPF(x)$, given $x\in B_n$. For that purpose, we shall need the following technical results.

\begin{lemma}\label{L:increasing_inf}
Let $u$ be a positive, palindromic-free braid. Let $c$ be a positive braid with $\inf(c)=0$, whose left normal form is
$c=c_1\cdots c_s$. Denote $k_i=\inf(\overleftarrow{c_i}\cdots \overleftarrow{c_1} \: u \: c_1\cdots c_i)$. Then
$k_{i+1}\leq k_i+1$ for $i=0,\ldots,s-1$. In particular, $\inf(\overleftarrow{c_i}\cdots \overleftarrow{c_1} \: u \:
c_1\cdots c_i)\leq i$ for $i=0,\ldots,s$.
\end{lemma}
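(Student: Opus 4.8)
The plan is to establish the recursion $k_{i+1}\le k_i+1$ directly; the last assertion then follows at once, since a positive palindromic-free braid has infimum zero, so $k_0=\inf(u)=0$ and induction gives $\inf(\overleftarrow{c_i}\cdots\overleftarrow{c_1}\,u\,c_1\cdots c_i)=k_i\le i$. Write $w_i=\overleftarrow{c_i}\cdots\overleftarrow{c_1}\,u\,c_1\cdots c_i$, so that $k_i=\inf(w_i)$ and $w_{i+1}=\overleftarrow{c_{i+1}}\,w_i\,c_{i+1}$; that is, $w_{i+1}$ is the $\varepsilon$-twisted conjugate of $w_i$ by the proper simple element $c_{i+1}$. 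Factoring $\Delta^{k_i}$ out to the left (conjugation by $\Delta^{k_i}$ equals $\tau^{k_i}$, which permutes the simple elements and preserves properness), write $w_i=\Delta^{k_i}P$ with $P$ positive and $\inf(P)=0$; then $w_{i+1}=\Delta^{k_i}\,\tilde t\,P\,c_{i+1}$ with $\tilde t=\tau^{k_i}(\overleftarrow{c_{i+1}})$ again a proper simple element, so the claim becomes
\[
\inf(\tilde t\,P\,c_{i+1})\le 1,\qquad\text{i.e.}\qquad \Delta^2\not\preccurlyeq\tilde t\,P\,c_{i+1}.
\]

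First I would dispose of the easy case. Since $\tilde t\preccurlyeq\Delta$, one has $\Delta^2\preccurlyeq\tilde t\,P\,c_{i+1}$ if and only if $\Delta\,\partial^{-1}(\tilde t)\preccurlyeq P\,c_{i+1}$; in particular this requires $\Delta\preccurlyeq P\,c_{i+1}$. So if $\inf(P\,c_{i+1})=0$ (right multiplication by $c_{i+1}$ does not raise the infimum), then by the standard inequality $\inf(ab)\le\sup(a)+\inf(b)$ we get $\inf(\tilde t\,P\,c_{i+1})\le\sup(\tilde t)+\inf(P\,c_{i+1})=1$, and we are done. Since $\inf(P\,c_{i+1})\le\inf(P)+\sup(c_{i+1})=1$ always, the only remaining case is $\inf(P\,c_{i+1})=1$; writing $P\,c_{i+1}=\Delta Q$ with $\inf(Q)=0$, what is left to prove is $\partial^{-1}(\tilde t)\not\preccurlyeq Q$, i.e.\ that left multiplication by $\overleftarrow{c_{i+1}}$ does not raise the infimum any further.

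This last step is the heart of the proof, and it is where the two remaining hypotheses are genuinely needed: that $c$ is given in \emph{left normal form}, and that $u$ is palindromic-free. (Neither can be dropped: without the normal-form assumption the statement fails — for example, $\varepsilon$-twisted conjugating $\sigma_1\in B_3$ twice by $\sigma_1\sigma_2$ raises the infimum by two.) The point is that $w_i=\overleftarrow{c_i}\,w_{i-1}\,c_i$ ends with $c_i$ and begins with $\overleftarrow{c_i}$, and the relation ``$c_ic_{i+1}$ left weighted'', i.e.\ $c_{i+1}\wedge\partial(c_i)=1$, says that $c_{i+1}$ cannot be absorbed back into that terminal $c_i$. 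I would run the argument by contradiction: assume $k_{i+1}=k_i+2$. Applying the standard inequalities $\inf(ab)\le\inf(a)+\sup(b)$ and $\inf(ab)\le\sup(a)+\inf(b)$ with a single simple factor forces $\inf(\overleftarrow{c_{i+1}}\,w_i)=\inf(w_i\,c_{i+1})=k_i+1$: a half‑twist $\Delta$ must be produced at the left end of $w_i$ (through the merge of $\overleftarrow{c_{i+1}}$ with the initial $\overleftarrow{c_i}$) and at the right end (through the merge of $c_{i+1}$ with the terminal $c_i$) simultaneously. I would then track the left normal forms through both merges: this rigidity, together with the left‑weighted relations among the $c_j$ (which keep the two merges from being independent), propagates inward along $w_i=\overleftarrow{c_i}\,w_{i-1}\,c_i$, $w_{i-1}=\overleftarrow{c_{i-1}}\,w_{i-2}\,c_{i-1}$, $\dots$, down to $u$, and, via Lemma~\ref{L:cancelling} and Lemma~\ref{L:sup_and_palindromic}, yields a decomposition $u=\overleftarrow{a}\,b\,a$ with $a$ nontrivial — contradicting that $u$ is palindromic-free.

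The main obstacle is precisely this last case, where right multiplication by $c_{i+1}$ already raises the infimum and one must rule out a further increase from left multiplication by $\overleftarrow{c_{i+1}}$. The real work is the bookkeeping of how a half‑twist can be created at both ends of $w_i$ at once: one has to use the left‑weighted structure of $c$ to tie the two merges together and then feed the resulting constraint back through the palindromic shape of $w_i$, rather than settling for the immediate but useless bound $k_{i+1}\le k_i+2$.
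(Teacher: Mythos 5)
Your reduction is fine as far as it goes: $k_0=0$ because palindromic-free implies $\inf(u)=0$, the bound $\inf(ab)\le\sup(a)+\inf(b)$ disposes of the case where right-multiplication by $c_{i+1}$ does not raise the infimum, and your $B_3$ example correctly shows the left-normal-form hypothesis cannot be dropped. But the remaining case --- ruling out $k_{i+1}=k_i+2$, i.e.\ a half twist being created at both ends of $w_i$ simultaneously --- is the entire content of the lemma, and for it you offer only a plan (``track the left normal forms through both merges\dots\ propagates inward\dots\ yields a decomposition $u=\overleftarrow{a}ba$''). No actual argument is given: you never say what constraint the left-weightedness $c_{i+1}\wedge\partial(c_i)=1$ imposes on the two merges, how that constraint survives the passage from $w_i$ to $w_{i-1}$, or how it eventually produces a nontrivial $a$ with $u=\overleftarrow{a}ba$. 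Moreover, the tools you name are the wrong ones: Lemma~\ref{L:cancelling} and Lemma~\ref{L:sup_and_palindromic} are statements about the canonical length of $\varepsilon(x)x=(\overleftarrow{x})^{-1}x$, and you have no control over $\ell(\varepsilon(w_i)w_i)$ in this setting; those lemmas are used in the paper for Theorem~\ref{T:twisted_EM}, not here.

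The paper's proof avoids any inward propagation by establishing a global dichotomy decided at the very first step. If $\inf(uc_1)=0$, then because $c_1\cdots c_s$ is in left normal form one gets $\inf(uc_1\cdots c_s)=0$, so \emph{no} right-multiplication ever raises the infimum and each $k_{i+1}-k_i\le 1$ comes solely from prepending the single simple element $\overleftarrow{c_{i+1}}$ (together with the observation that $\Delta^p$, $p\le i$, is a prefix of $\overleftarrow{c_i}\cdots\overleftarrow{c_1}uc_1\cdots c_s$ iff it is a prefix of $\overleftarrow{c_i}\cdots\overleftarrow{c_1}uc_1\cdots c_i$). If instead $\inf(uc_1)=1$, write $u=v\,\partial^{-1}(c_1)$; palindromic-freeness of $u$ forces $\partial(\overleftarrow{c_1})\wedge v=1$, i.e.\ $\overleftarrow{c_1}v$ is left-weighted, and since $\overleftarrow{c_s}\cdots\overleftarrow{c_1}$ is a right normal form this gives $\inf(\overleftarrow{c_s}\cdots\overleftarrow{c_1}v)=0$: now \emph{no} left-multiplication ever raises the infimum and the increases come one at a time from the right. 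In either branch at most one of the two simple multiplications at each step can raise the infimum, which is exactly $k_{i+1}\le k_i+1$. You would need to either carry out this kind of argument or fully execute your propagation scheme; as written, the key step is asserted, not proved.
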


\begin{proof} Recall that, since $u$ is palindromic-free, $\inf(u)=0$. As the infimum of an element can increase by at
most one when multiplied by a simple element, one has either $\inf(uc_1)=0$ or $\inf(uc_1)=1$.

Suppose that $\inf(uc_1)=0$, that is, $\Delta$ is not a prefix of $uc_1$. It is well known that, as $c_1\cdots c_s$ is
in left normal form, then $\inf(uc_1\cdots c_s)=0$. Since the infimum of an element can increase by at most one when it
is multiplied by a simple element, one has $\inf(\overleftarrow{c_i}\cdots  \overleftarrow{c_1} \: u \: c_1\cdots
c_s)\leq i$, moreover $\inf(\overleftarrow{c_{i}}\cdots  \overleftarrow{c_1} \: u \: c_1\cdots c_s)\leq
\inf(\overleftarrow{c_{i-1}}\cdots  \overleftarrow{c_1} \: u \: c_1\cdots c_s)+1$ for $i=1,\ldots,s$. It suffices then
to show that $k_i=\inf(\overleftarrow{c_i}\cdots  \overleftarrow{c_1} \: u \: c_1\cdots c_s)$ for $i=0,\ldots,s$. But
since we already showed that $\inf(\overleftarrow{c_i}\cdots  \overleftarrow{c_1} \: u \: c_1\cdots c_s)\leq i$, and
$c_1\cdots c_s$ is in left normal form, then $\Delta^p$ is a prefix of $\overleftarrow{c_i}\cdots \overleftarrow{c_1}
\: u \: c_1\cdots c_s$ (necessarily $p\leq i$) if and only if it is a prefix of $\overleftarrow{c_i}\cdots
\overleftarrow{c_1} \: u \: c_1\cdots c_i$. Hence $\inf(\overleftarrow{c_i}\cdots \overleftarrow{c_1} \: u \: c_1\cdots
c_s)= \inf(\overleftarrow{c_i}\cdots  \overleftarrow{c_1} \: u \: c_1\cdots c_i)=k_i$ for $i=0,\ldots,s$, as we wanted
to show.

Now suppose that $\inf(uc_1)=1$, that is, $uc_1= v\Delta$ for some positive $v$, prefix of $u$. This means that
$u=v\partial^{-1}(c_1)$. Since $u$ is palindromic-free, one has \mbox{$\overleftarrow{\partial^{-1}(c_1)}\wedge v =1$}.
But it is easy to see that $\overleftarrow{\partial^{-1}(c_1)}=\partial(\overleftarrow{c_1})$, so one has
\mbox{$\partial(\overleftarrow{c_1})\wedge v=1$}, that is, the decomposition $\overleftarrow{c_1} v$ is left-weighted
as written. This in particular implies that $\inf(\overleftarrow{c_1}v)=0$ and, since $\overleftarrow{c_s}\cdots
\overleftarrow{c_1}$ is in right normal form, that $\inf(\overleftarrow{c_s}\cdots \overleftarrow{c_1}v)=0$. Hence
$\inf(\overleftarrow{c_s}\cdots \overleftarrow{c_1}uc_1)= \inf(\overleftarrow{c_s}\cdots
\overleftarrow{c_1}v\Delta)=1=k_1=k_0+1$. As the infimum can increase by at most one when an element is multiplied by a
simple one, then one has $\inf(\overleftarrow{c_s}\cdots \overleftarrow{c_1}uc_1\cdots c_i)\leq i$, moreover
$\inf(\overleftarrow{c_s}\cdots \overleftarrow{c_1}uc_1\cdots c_{i})\leq \inf(\overleftarrow{c_s}\cdots
\overleftarrow{c_1}uc_1\cdots c_{i-1})+1$ for $i=1,\ldots,s$. Repeating the argument of the previous case, one has
$k_i=\inf(\overleftarrow{c_s}\cdots \overleftarrow{c_1} \: u \: c_1\cdots c_i)$ for $i=0,\ldots,s$, and the result is
shown.
\end{proof}

\begin{corollary}\label{C:increasing_inf}
Let $u$ be a positive, palindromic-free braid. Let $c$ be a positive braid with $\inf(c)=0$, and whose left normal form
is $c=c_1\cdots c_s$. If $\inf(\overleftarrow{c_s}\cdots \overleftarrow{c_1} \: u \: c_1\cdots c_s)=s$, then
$\inf(\overleftarrow{c_i}\cdots \overleftarrow{c_1} \: u \: c_1\cdots c_i)=i$ for $i=0,\ldots,s$.
\end{corollary}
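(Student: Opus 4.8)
The plan is to use Lemma~\ref{L:increasing_inf} together with a simple ``no room to decrease'' argument. Write $k_i = \inf(\overleftarrow{c_i}\cdots \overleftarrow{c_1} \: u \: c_1\cdots c_i)$ as in the lemma, so that $k_0 = \inf(u) = 0$ (since $u$ is palindromic-free), $k_s = s$ by hypothesis, and Lemma~\ref{L:increasing_inf} gives $k_{i+1}\leq k_i + 1$ for $i = 0,\ldots,s-1$. The key additional observation I would establish is a lower bound going the other way: $k_{i+1}\geq k_i - 1$, i.e. the infimum also cannot drop by more than one when we pass from step $i$ to step $i+1$. Granting both inequalities, the sequence $k_0,\ldots,k_s$ starts at $0$, ends at $s$, has length $s+1$, and each consecutive difference lies in $\{-1,0,1\}$; the only such sequence is $k_i = i$ for all $i$, which is exactly the claim.

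To get the lower bound $k_{i+1}\geq k_i - 1$, I would argue as follows. Set $w_i = \overleftarrow{c_i}\cdots \overleftarrow{c_1}\: u\: c_1\cdots c_i$, so that $w_{i+1} = \overleftarrow{c_{i+1}}\, w_i\, c_{i+1}$. Passing from $w_i$ to $w_i c_{i+1}$ multiplies on the right by a simple element, which changes the infimum by at most one in absolute value (multiplying by a simple element $s$ means multiplying by $\Delta$ and then by the positive element $\partial^{-1}(s)^{-1}\cdot$ ... more simply: $\inf(w_i c_{i+1}) \geq \inf(w_i c_{i+1}\partial(c_{i+1})) - 1 = \inf(w_i \Delta) - 1 = \inf(w_i)$, using that $\partial(c_{i+1})$ is simple hence positive, so right-multiplication by it cannot decrease the infimum, and $w_i\Delta = w_i c_{i+1}\partial(c_{i+1})$). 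Symmetrically, left-multiplication by the simple element $\overleftarrow{c_{i+1}}$ cannot decrease the infimum below $\inf(w_i c_{i+1}) - 1$ by the analogous estimate with left complements. Combining, $k_{i+1} = \inf(w_{i+1}) \geq \inf(w_i) - 1 = k_i - 1$, as required.

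Alternatively—and this is cleaner—I would avoid proving the lower bound from scratch and instead run Lemma~\ref{L:increasing_inf} ``backwards''. Note that $MPF$-type data is symmetric under the anti-automorphism $\mathrm{rev}$: if $u$ is palindromic-free then so is $\overleftarrow{u}$, and the left normal form $c = c_1\cdots c_s$ corresponds under reversal to the right normal form $\overleftarrow{c_s}\cdots\overleftarrow{c_1}$ of $\overleftarrow{c}$. Applying Lemma~\ref{L:increasing_inf} to the word read from the other end, one obtains $k_s - k_{s-i} \leq i$, i.e. $k_{j} \geq k_s - (s-j) = s - (s-j) = j$ for each $j$. Combined with the bound $k_j \leq j$ already furnished by Lemma~\ref{L:increasing_inf}, this forces $k_j = j$ for $j = 0,\ldots,s$.

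I expect the main technical point to be the bookkeeping in this reversal step: one must check that ``$\overleftarrow{c_i}\cdots\overleftarrow{c_1}\: u\: c_1\cdots c_i$'' and its image under conjugating the indexing by $j\mapsto s-j$ really is the quantity to which Lemma~\ref{L:increasing_inf} applies when $u$ is replaced by $\overleftarrow{c_{s}}\cdots\overleftarrow{c_{i+1}}\,u\,c_{i+1}\cdots c_{s}$ — in particular one must verify this middle braid is again palindromic-free and has infimum $0$. If that turns out to be delicate, I would fall back on the direct ``$k_{i+1}\geq k_i - 1$'' estimate of the previous paragraph, which is elementary and self-contained.
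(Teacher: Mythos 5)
Your main argument is correct and is essentially the paper's proof: $k_0=0$, $k_s=s$, and $k_{i+1}\le k_i+1$ from Lemma~\ref{L:increasing_inf} already force $k_i=i$ by telescoping, since $s=k_s\le k_{s-1}+1\le\cdots\le k_0+s=s$ makes every inequality an equality. The lower bound $k_{i+1}\ge k_i-1$ (and the more delicate reversal variant you rightly hesitate over) is therefore superfluous: a sum of $s$ integer steps, each at most $1$, that totals $s$ must consist entirely of steps equal to $1$, with no need to bound the steps from below.
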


\begin{proof}
Let $k_i=\inf(\overleftarrow{c_i}\cdots \overleftarrow{c_1} \: u \: c_1\cdots c_i)$ for $i=0,\ldots,s$. We know that
$k_0=0$ since $u$ is palindromic-free, and that $k_{i+1}\leq k_i+1$ by the previous result. By induction on $s$, it
follows that $k_s\leq s$ and the equality holds if and only if $k_{i+1}=k_i+1$ for $i=0,\ldots,s-1$. But we have
$k_s=s$ by hypothesis, hence $k_i=i$ for $i=0,\ldots,s$, as we wanted to show.
\end{proof}

\begin{corollary}\label{C:equal_length}
Let $u,v\in B_n$ be positive, palindromic-free braids. Let $a$ and $b$ be nontrivial positive braids such that
$a\wedge_R b=1$ (hence $\inf(a)=\inf(b)=0$). Suppose that $\overleftarrow{a} u a = \overleftarrow{b} v b$. Then
$\ell(a)=\ell(b)$.
\end{corollary}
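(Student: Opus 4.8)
Write $x:=\overleftarrow{a}\,u\,a=\overleftarrow{b}\,v\,b$; recall from the hypothesis that $\inf(a)=\inf(b)=0$. The strategy is to compute $\inf(x)$ from each of the two expressions and to show that it equals both $\ell(a)$ and $\ell(b)$, which settles the statement at once. The upper bounds are immediate: Lemma~\ref{L:increasing_inf}, applied to the palindromic-free braid $u$ and the positive braid $c=a$ (taking $i=s:=\ell(a)$ in the last assertion of the lemma), gives $\inf(x)=\inf(\overleftarrow{a}\,u\,a)\le\ell(a)$; the same lemma applied to $v$ and $c=b$ gives $\inf(x)\le\ell(b)$.

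It remains to prove the reverse inequalities; I will argue $\inf(x)\ge\ell(a)$, the inequality $\inf(x)\ge\ell(b)$ following by the same argument with the roles of $(a,u)$ and $(b,v)$ exchanged. Suppose not, so $\inf(x)<\ell(a)=s$. Write the left normal form $a=a_1\cdots a_s$ (all factors proper simple, since $\inf(a)=0$) and set $k_i:=\inf(\overleftarrow{a_i}\cdots\overleftarrow{a_1}\,u\,a_1\cdots a_i)$. Then $k_0=0$ (as $u$ palindromic-free forces $\inf(u)=0$), $k_s=\inf(x)<s$, and $k_{i+1}\le k_i+1$ by Lemma~\ref{L:increasing_inf}; by Corollary~\ref{C:increasing_inf}, the failure of $k_s=s$ means some step is ``flat''. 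Let $j$ be the smallest index with $k_j=k_{j-1}$, so that $k_i=i$ for $i<j$ and $k_{j-1}=k_j=j-1$. Writing $y:=\overleftarrow{a_{j-1}}\cdots\overleftarrow{a_1}\,u\,a_1\cdots a_{j-1}=\Delta^{\,j-1}z$ with $\inf(z)=0$, and using $\overleftarrow{a_j}\,\Delta^{\,j-1}=\Delta^{\,j-1}\tau^{\,j-1}(\overleftarrow{a_j})$, the equality $\inf(\overleftarrow{a_j}\,y\,a_j)=\inf(y)$ unwinds to $\Delta\not\preccurlyeq\tau^{\,j-1}(\overleftarrow{a_j})\,z\,a_j$; that is, the proper simple prefix $\tau^{\,j-1}(\overleftarrow{a_j})$ and the proper simple suffix $a_j$ of this positive, infimum-zero braid ``miss each other through $z$''.

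The heart of the matter is to turn this flat step into a contradiction with $a\wedge_R b=1$. The mechanism I would pursue: a flat step lets one peel $a_j$ (together with $\tau^{\,j-1}(\overleftarrow{a_j})$ on the left) away from $x$ without changing $\inf$, exhibiting $x$ again in the form $\overleftarrow{a'}\,u'\,a'$ with $u'$ palindromic-free and $a'$ a proper right-divisor of $a$, hence $\ell(a')<\ell(a)$; iterating, one drives the canonical length of the ``$a$-side'' strictly below $\inf(x)$, which is impossible once one confronts the untouched expression $x=\overleftarrow{b}\,v\,b$ with $a\wedge_R b=1$, since this reduction can only strip off genuine right-divisors and would ultimately be forced to remove a suffix shared by $a$ and $b$. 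Granting $\inf(x)=\ell(a)$ and, symmetrically, $\inf(x)=\ell(b)$, we obtain $\ell(a)=\ell(b)=\inf(x)$. I expect the peeling step — verifying that $u'$ stays palindromic-free and that the reduction really eats into a common suffix of $a$ and $b$ — to be the only genuine obstacle; the two upper bounds and the extraction of a flat step are direct consequences of Lemma~\ref{L:increasing_inf} and Corollary~\ref{C:increasing_inf}.
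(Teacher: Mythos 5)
There is a genuine gap. Your two upper bounds are fine: Lemma~\ref{L:increasing_inf} with $c=a$ (resp.\ $c=b$) does give $\inf(x)\le\ell(a)$ and $\inf(x)\le\ell(b)$, and the extraction of a ``flat step'' $k_j=k_{j-1}$ from the assumption $\inf(x)<\ell(a)$ is also correct. But the reverse inequality $\inf(x)\ge\ell(a)$ is the entire content of the corollary, and the mechanism you propose for it does not work as described. A flat step tells you that $\inf(\overleftarrow{a_j}\,y\,a_j)=\inf(y)$; it does not let you ``peel $a_j$ away from $x$'', because $\overleftarrow{a_j}\,y\,a_j$ and $y$ are different elements of $B_n$ --- deleting the conjugating factor changes the braid, so you cannot re-exhibit the \emph{same} $x$ as $\overleftarrow{a'}\,u'\,a'$ with $a'$ a proper right-divisor of $a$. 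Moreover the hypothesis $a\wedge_R b=1$, which is what must ultimately produce the contradiction, is never brought into contact with the argument: everything you do takes place on the $a$-side alone, and (as your own example-free discussion implicitly shows) $\inf(\overleftarrow{a}\,u\,a)=\ell(a)$ is simply false without some interaction with the second expression $\overleftarrow{b}\,v\,b$ (e.g.\ $u=\sigma_1$, $a=\sigma_1$ gives $\inf(\sigma_1^3)=0<1$). So the ``only genuine obstacle'' you flag is in fact the whole proof, and the route you sketch toward it is broken at the first move.

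The paper avoids ever proving a lower bound for $\inf(\overleftarrow{a}\,u\,a)$ itself. Writing $p=\ell(a)$, $q=\ell(b)$, it sets $b^*=b^{-1}\Delta^q$ and uses $a\wedge_R b=1$ exactly once, to check that the concatenation of the right normal forms of $a$ and $b^*$ is right-weighted at the junction, whence $\inf(ab^*)=0$ and $\ell(ab^*)=p+q$. Then $\overleftarrow{b^*}\,\overleftarrow{a}\,u\,a\,b^*=\overleftarrow{b^*}\,\overleftarrow{b}\,v\,b\,b^*=\Delta^q v\Delta^q$, whose infimum is exactly $2q$ because $\inf(v)=0$; comparing with the upper bound $\inf(\overleftarrow{c}\,u\,c)\le\ell(c)=p+q$ from Lemma~\ref{L:increasing_inf} applied to $c=ab^*$ yields $2q\le p+q$, i.e.\ $q\le p$, and symmetry finishes. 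The lesson is that the lower bound is obtained by \emph{completing} $b$ to a power of $\Delta$, where the infimum becomes exactly computable, rather than by analyzing where the infimum of $\overleftarrow{a_i}\cdots\overleftarrow{a_1}\,u\,a_1\cdots a_i$ fails to grow. If you want to salvage your outline, you would need to replace the peeling step by some such device that genuinely couples the two factorizations of $x$ through $a\wedge_R b=1$.
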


\begin{proof}
Denote $\ell(a)=p$ and $\ell(b)=q$, and write $a=a_1\cdots a_p$ and $b=b_1\cdots b_q$ in right normal forms. Consider
$b^*=b^{-1}\Delta^q$. Then $b^*$ is a positive braid with $\inf(b^*)=0$. Namely, its right normal form is $b^*=
\partial(b_q) \partial^3(b_{q-1})\cdots \partial^{2q-1}(b_1)$. Then consider the product
 $$
ab^*=a_1\cdots a_p \partial(b_q) \partial^3(b_{q-1})\cdots \partial^{2q-1}(b_1).
 $$
We claim that the above decomposition is the right normal form of $ab^*$. We just need to show that $a_p \partial(b_q)$
is right-weighted as written. But $a\wedge_R b=1$, so $1=a_p \wedge_R b_q = a_p \wedge_R \partial^{-1}(\partial(b_q))$,
which precisely means that $a_p \partial(b_q)$ is right-weighted, showing the claim.  This implies in particular that
$\inf(ab^*)=0$ and $\ell(ab^*)=p+q$.

Notice that $\overleftarrow{b^*} \overleftarrow{a} u ab^* = \overleftarrow{b^*} \overleftarrow{b} v b b^* = \Delta^q v
\Delta^q$. Since $\inf(v)=0$ as $v$ is palindromic free, one has $\inf(\overleftarrow{b^*} \overleftarrow{a} u
ab^*)=\inf(\Delta^q v \Delta^q)=2q$. On the other hand, $\inf(ab^*)=0$ and $\ell(ab^*)=p+q$, so
Lemma~\ref{L:increasing_inf} implies that $\inf(\overleftarrow{b^*} \overleftarrow{a} u ab^*)\leq p+q$. Therefore
$2q\leq p+q$, that is, $q\leq p$. By symmetry, one also has $p\leq q$, so the equality holds.
\end{proof}

Recall that we want to find a method to compute, for any given braid $x\in B_n$, the set $MPF(x)$ i.e. the (finite) set
of positive, palindromic-free, $\varepsilon$-twisted conjugates of $x$ of minimal canonical length. Notice that if two
elements $u$ and $v$ are $\varepsilon$-twisted conjugated, that is, if $\overleftarrow{c}uc=v$ for some braid $c$, then
we can multiply on both sides by a suitable power of $\Delta$ such that $c\Delta^p$ is positive, in such a way that
$\Delta^p \overleftarrow c u c \Delta^p =\Delta^p v \Delta^p $, so we have written $\overleftarrow A u A =
\overleftarrow B v B$ with $A$ and $B$ positive. Moreover, if $d=A\wedge_R B$ is the maximal common suffix of $A$ and
$B$, then multiplying the above equality from the right by $d^{-1}$ and from the left by $(\overleftarrow{d})^{-1}$, we
finally get $\overleftarrow a u a = \overleftarrow b v b$, with $a$ and $b$ positive and such that $a\wedge_R b =1$, as
in the hypothesis of the above result.  We will be specially interested in the case in which $a$ and $b$ are simple
elements.

\begin{definition}
We will say that two elements $u,v\in B_n$ are {\bf simply $\varepsilon$-twisted conjugated}, or that they are related
by a {\bf simple $\varepsilon$-twisted conjugation}, if there exist simple elements $a$ and $b$ such that
$\overleftarrow a u a = \overleftarrow b v b$.
\end{definition}

The main result of this section is analogous, with respect to $\varepsilon$-twisted conjugacy, to the following famous
result by El-Rifai and Morton with respect to conjugacy.

\begin{theorem}\label{T:EM}{\rm \cite{EM}}
Let $u,v\in B_n$ be conjugated braids such that $\ell(u)\leq r$ and $\ell(v)\leq r$ for some $r$. Then there is a chain
$u=u_0, u_1,\ldots, u_k=v$, with $\ell(u_i)\leq r$ for all $i$, such that $u_{i-1}$ is conjugated to $u_i$ by a simple
element, for $i=1,\ldots,k$. Namely, if $c$ is a positive element such that $c^{-1}uc=v$, and $c=c_1\cdots c_k$ is its
left normal form, then one can take $u_i= c_i^{-1}\cdots c_1^{-1} u c_1\cdots c_i$.
\end{theorem}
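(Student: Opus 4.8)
The plan is to prove the explicit assertion — that for the chain $u_i=c_i^{-1}\cdots c_1^{-1}uc_1\cdots c_i$ one has $\ell(u_i)\le r$ — from which the abstract existence statement is immediate. First I would dispose of the reduction to a positive conjugator: if $w^{-1}uw=v$ for an arbitrary $w$, then, since $\Delta^2$ is central, $w\Delta^{2m}$ induces the same conjugation and is positive for $m$ large, so we may assume $c=w\Delta^{2m}$ is positive and write its left normal form $c=c_1\cdots c_k$ (allowing leading factors equal to $\Delta$). Each passage $u_{i-1}\to u_i$ is then conjugation by the simple element $c_i$, with $u_0=u$ and $u_k=v$, so only the uniform length bound remains. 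For it I would exploit the two presentations of each intermediate term, $u_i=(c_1\cdots c_i)^{-1}u(c_1\cdots c_i)$ and $u_i=(c_{i+1}\cdots c_k)v(c_{i+1}\cdots c_k)^{-1}$, in which $c_1\cdots c_i$ is a prefix and $c_{i+1}\cdots c_k$ a suffix of a left normal form, hence both positive with infimum $0$.

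The heart of the argument is a local lemma describing how $\inf$ and $\sup$ respond to conjugation by one factor of the normal form. Writing $x=\Delta^p x_1\cdots x_\ell$ in left normal form, I would establish the coupling that, when the incoming factor is left-weighted against $x$ (which is exactly what left-weightedness of the consecutive pair $c_ic_{i+1}$ supplies as we march along $c$), conjugation by that factor cannot simultaneously lower the infimum and raise the supremum. The mechanism is the complement map: I would translate ``$s_1s_2$ left-weighted'' into $s_2\wedge\partial(s_1)=1$, push a tentative $\Delta$ that would witness a drop of $\inf$ (or, dually via right normal forms, a rise of $\sup$) through the identities $\partial^2=\tau$ and $\tau^2=\mathrm{id}$, and reach a contradiction with coprimality. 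Lemma~\ref{L:cancelling} is the cancellation tool here: a hypothetical simultaneous increase of $\sup$ and decrease of $\inf$ at a single step would force more cancellation between the positive and negative parts of $u_i$ than the normal forms of $u$ (read from the left) and of $v$ (read from the right) permit.

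With the local lemma in hand I would run the two-sided bookkeeping. Reading the chain from the $u$-end, left-weightedness of the $c_ic_{i+1}$ keeps $\inf(u_i)$ from dropping below its endpoint value until the supremum has had to grow, while reading it from the $v$-end controls $\sup(u_i)$ by $\sup(v)$; the synchronization of the two effects is what yields $\ell(u_i)\le\max(\ell(u),\ell(v))\le r$, by induction on $k$. The main obstacle is precisely this global synchronization. The naive envelope bounds $\inf(u_i)\ge\min(\inf(u),\inf(v))$ and $\sup(u_i)\le\max(\sup(u),\sup(v))$ do \emph{not} by themselves give $\ell(u_i)\le r$ — a growth of $\sup$ charged to one endpoint together with a drop of $\inf$ charged to the other would overshoot — and even the per-step coupling alone is too weak, since length could in principle creep upward one unit at a time. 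The delicate point is therefore to upgrade the local left-weighted coupling to a genuine convexity of $i\mapsto\ell(u_i)$, extracting it from the normal-form structure of the conjugator $c$ rather than from the endpoints alone; everything surrounding it is routine manipulation of normal forms and induction on the number of factors of $c$.
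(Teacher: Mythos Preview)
The paper gives no proof of this theorem; it is quoted from~\cite{EM} and used as a black box, so there is nothing in the paper to compare your attempt against.

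As for the attempt itself: you have correctly laid out the skeleton (reduce to a positive conjugator, factor it via its left normal form, control $\inf$ and $\sup$ step by step using left-weightedness of consecutive factors), and you correctly diagnose the obstacle --- the separate envelope bounds $\inf(u_i)\ge\min(\inf u,\inf v)$ and $\sup(u_i)\le\max(\sup u,\sup v)$, which is essentially what the El-Rifai--Morton argument delivers, do not by themselves bound $\ell(u_i)$ by $\max(\ell(u),\ell(v))$. But the proposal stops at the diagnosis: the sentence ``the delicate point is therefore to upgrade the local left-weighted coupling to a genuine convexity of $i\mapsto\ell(u_i)$'' names the missing step without supplying it, and the preceding paragraph describes what a hypothetical local lemma \emph{should} prevent rather than proving that it does. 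Your appeal to Lemma~\ref{L:cancelling} is also off target: that lemma governs cancellation in a single product $a^{-1}b$ of positive braids, not the evolution of $\inf$ and $\sup$ under iterated simple conjugation, and nothing in the sketch shows how to bring it to bear on the conjugates $u_i$. So what you have written is an accurate outline of where the difficulty lies, not a proof.

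For what it is worth, the only place the present paper invokes Theorem~\ref{T:EM} (inside the proof of Theorem~\ref{T:twisted_EM}) involves endpoints $\varepsilon(u)u$ and $\tau^{p}(\varepsilon(v)v)$ that both satisfy $\inf\ge -r$ and $\sup\le r$; there the plain envelope bounds already yield $\ell(u_i)\le 2r$, so the sharper $\ell$-formulation is not actually needed for that application.
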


In our case, dealing with $\varepsilon$-twisted conjugacy, we will restrict to positive, palin\-dro\-mic-free braids.

\begin{theorem}\label{T:twisted_EM}
Let $u,v\in B_n$ be positive, palindromic-free, $\varepsilon$-twisted conjugated braids such that $\ell(u)\leq r$ and
$\ell(v)\leq r$ for some $r$. Then there is a chain $u=u_0, u_1,\ldots, u_k=v$ of positive, palindromic-free braids,
with $\ell(u_i)\leq r$ for all $i$, such that $u_{i-1}$ is simply $\varepsilon$-twisted conjugated to $u_i$, for
$i=1,\ldots,k$.
\end{theorem}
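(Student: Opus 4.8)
The plan is to mimic the El-Rifai–Morton strategy (Theorem~\ref{T:EM}), but with two extra complications: we must keep every intermediate braid \emph{positive and palindromic-free}, and the conjugating element acts on both sides (as $\overleftarrow{c}u c$). Start from an $\varepsilon$-twisted conjugating element. By the discussion preceding Corollary~\ref{C:equal_length}, since $u$ and $v$ are $\varepsilon$-twisted conjugated we may write $\overleftarrow{a}ua=\overleftarrow{b}vb$ with $a,b$ positive and $a\wedge_R b=1$; since $u$ is palindromic-free, $\inf(a)=\inf(b)=0$ (Lemma~\ref{L:increasing_inf}), and by Corollary~\ref{C:equal_length} we have $\ell(a)=\ell(b)=:s$. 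Write the left normal forms $a=a_1\cdots a_s$ and $b=b_1\cdots b_s$ (factors proper simple elements, allowing the convention that some initial factors may equal $\Delta$). The idea is to build the chain from $u$ by "peeling off" the factors $a_i$ one at a time from $u$'s side, and likewise from $v$'s side, meeting in the middle.

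The key intermediate braids are
$$
w_i \;=\; \overleftarrow{a_i}\cdots\overleftarrow{a_1}\; u\; a_1\cdots a_i, \qquad i=0,1,\dots,s,
$$
and symmetrically $w'_j=\overleftarrow{b_j}\cdots\overleftarrow{b_1}\,v\,b_1\cdots b_j$; note $w_0=u$, $w'_0=v$, and $w_s=\overleftarrow{a}ua=\overleftarrow{b}vb=w'_s$. The first point to establish is that consecutive $w_i$ are related by a simple $\varepsilon$-twisted conjugation: indeed $w_{i+1}=\overleftarrow{a_{i+1}}\,w_i\,a_{i+1}$, so taking $a=a_{i+1}$ and $b=1$ in the definition does the job (after reducing common suffixes if one insists on $a\wedge_R b=1$, but that is automatic here). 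The substantive work is: (1) show each $w_i$ is positive; (2) show each $w_i$ is palindromic-free; (3) show $\ell(w_i)\le r$. For (1), positivity: since $\inf(u)=0$ and $a_1\cdots a_s$ is a left normal form, $ua_1\cdots a_i$ is positive with controlled infimum, and then left-multiplying by the reverse of a positive braid keeps it positive — more carefully, one uses that $\overleftarrow{a_s}\cdots\overleftarrow{a_1}$ is the right normal form of $\overleftarrow{a}$, so $\overleftarrow{a_i}\cdots\overleftarrow{a_1}$ is a \emph{suffix} of $\overleftarrow{a}$, hence $w_i$ is "between" $u$ and $\overleftarrow{a}ua$ in a way that forces positivity. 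For (3), the bound $\ell(w_i)\le r$: this is where Corollary~\ref{C:increasing_inf} and Lemma~\ref{L:increasing_inf} do the heavy lifting — they control $\inf(w_i)$, and combined with an upper bound on $\sup(w_i)$ (coming from $\sup(w_i)\le \sup(w_s)$ plus the length accounting, or from the fact that $w_s$ has canonical length related to $\ell(u)$ and the $a_i$'s cancelling) one squeezes $\ell(w_i)=\sup(w_i)-\inf(w_i)\le r$.

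The hardest step, and the one deserving the most care, is (2): preservation of the palindromic-free property along the chain. This is genuinely delicate because, as the Example shows, $\varepsilon$-twisted conjugacy classes of palindromic-free braids can be infinite, so there is no automatic finiteness to lean on; one must argue directly that if $w_i$ were not palindromic-free — say $\sigma_j\preccurlyeq w_i$ and $w_i\succcurlyeq\sigma_j$ simultaneously in the "bad" sense $\sigma_j^{-1}w_i\sigma_j^{-1}$ positive — then tracing back through $w_i=\overleftarrow{a_i}\cdots\overleftarrow{a_1}\,u\,a_1\cdots a_i$ would force $u$ itself to fail palindromic-freeness, or would contradict $a\wedge_R b=1$, or would contradict the left-weightedness of $a_1\cdots a_s$. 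I expect this to hinge on the infimum estimates of Lemma~\ref{L:increasing_inf}: being palindromic-free is (via Definition~\ref{D:palindromic_free}) tightly linked to $\inf(\sigma_j^{-1}w_i\sigma_j^{-1})<0$ failing, i.e. to $\inf(w_i)$ not "jumping" when we conjugate by $\sigma_j$, and the Lemma precisely says the infima $k_i$ satisfy $k_{i+1}\le k_i+1$ with $k_0=0$, so a failure of palindromic-freeness at stage $i$ would propagate an unwanted infimum increase. Finally, having built the chain $u=w_0,\dots,w_s=w'_s,\dots,w'_0=v$ of positive palindromic-free braids of canonical length $\le r$, each consecutive pair simply $\varepsilon$-twisted conjugated, we concatenate the $w$-chain with the reverse of the $w'$-chain to obtain the desired $u=u_0,u_1,\dots,u_k=v$, completing the proof.
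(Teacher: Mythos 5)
Your overall plan --- peel off the normal-form factors of the conjugator one at a time, in the spirit of Theorem~\ref{T:EM} --- is the right starting intuition, but the chain you actually propose does not work, and the step you yourself flag as ``the hardest'' is precisely where it breaks down with no repair mechanism in sight. The intermediate braids $w_i=\overleftarrow{a_i}\cdots\overleftarrow{a_1}\,u\,a_1\cdots a_i$ are in general \emph{neither} palindromic-free \emph{nor} of canonical length $\le r$: a single simple $\varepsilon$-twisted conjugation can destroy palindromic-freeness (in $B_3$, take $u=\sigma_1$, $v=\sigma_2$, $a=\sigma_2$, $b=\sigma_1$, so $a\wedge_R b=1$; your chain passes through $\overleftarrow{a}ua=\sigma_2\sigma_1\sigma_2=\sigma_1\sigma_2\sigma_1=\Delta$, which is not palindromic-free), and $\sup(w_1)$ can already reach $r+2$. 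Worse, your chain is forced to pass through the middle element $w_s=\overleftarrow{a}ua=\overleftarrow{b}vb$, which is exactly the ``large'' common twisted conjugate one must avoid. Your hope that Lemma~\ref{L:increasing_inf} alone rules out failures of palindromic-freeness is not substantiated and cannot be: what is needed is not that the naive intermediates are good, but that bad intermediates can be \emph{replaced} by good ones, and your rigidly prescribed chain leaves no room for such a replacement.

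The paper's proof is organized around exactly this replacement. It sets $b^*=b^{-1}\Delta^p$ and $c=ab^*$ with left normal form $c_1\cdots c_{2p}$, so that $\overleftarrow{c}uc=\Delta^pv\Delta^p$ and Corollary~\ref{C:increasing_inf} forces $\inf(\overleftarrow{c_2}\overleftarrow{c_1}uc_1c_2)=2$; peeling off \emph{two} factors at once thus gives $\overleftarrow{c_2}\overleftarrow{c_1}uc_1c_2=\Delta w'\Delta$, i.e.\ a single simple $\varepsilon$-twisted conjugation from $u$ to $w'$ with $\sup(w')\le r+2$. The element $w'$ is explicitly acknowledged to be possibly bad, and the technical heart of the argument is showing $w'=\overleftarrow{x}wx$ with $w$ palindromic-free and $\sup(w)\le r$: this uses Theorem~\ref{T:EM} applied to $\varepsilon(u)u$ to obtain $\ell(\varepsilon(w')w')\le 2r$, then Lemma~\ref{L:sup_and_palindromic} (plus a separate hand analysis of the residual case $\sup(w')=3$, $r=1$) --- none of which appears in your proposal. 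Finally the argument does not meet in the middle but inducts on $p=\ell(a)$: after the one corrected step $u\to w$, it produces conjugators $y,z$ with $y\wedge_R z=1$ and $\ell(y)=\ell(z)\le p-1$ relating $w$ and $v$, and invokes the induction hypothesis. Without the $\Delta^pv\Delta^p$ normalization, the two-factors-at-a-time device, and above all the replacement of $w'$ by a palindromic-free $w$ of controlled supremum, the theorem is not proved.
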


\begin{proof}
As we saw above, there are positive elements $a$ and $b$, with $a\wedge _R b=1$, such that $\overleftarrow a u a =
\overleftarrow b v b$. Since $u$ and $v$ are palindromic free, $a$ is trivial if and only if so is $b$. If $a$ and $b$
are nontrivial, the hypotheses of Corollary~\ref{C:equal_length} are satisfied, thus $\ell(a)=\ell(b)=p$ in any case.
We will show the result by induction on $p$. If $p=0$ the result is trivially true, so we will assume that $p>0$ and
that the result is true for all values between 0 and $p-1$.

The strategy of the proof will be to find some palindromic-free braid $w$ with $\ell(w)\leq r$, such that
$\overleftarrow s u s= \overleftarrow t w t$ for some simple braids $s$ and $t$ (this is a chain of length 1 from $u$
to $w$), and also $\overleftarrow{y} w y = \overleftarrow{z} v z$ for some positive elements $y,z$ such that $y\wedge_R
z=1$ and $\ell(y)=\ell(z)\leq p-1$. The induction hypothesis provides a chain from $w$ to $v$, so the result will
follow by concatenating both chains.

We start as in the proof of  Corollary~\ref{C:equal_length}, defining $b^*=b^{-1}\Delta^p$, and noticing that
$\inf(ab^*)=0$, $\ell(ab^*)=2p$ and $\overleftarrow{b^*}\overleftarrow{a} u a b^* = \Delta^p v \Delta^p$. Denote
$c=ab^*$, and let $c=c_1\cdots c_{2p}$ be its left normal form. Then $\overleftarrow{c}uc=\Delta^p v \Delta^p$. Since
$v$ is palindromic-free, thus $\inf(v)=0$, one has $\inf(\overleftarrow{c}uc)=2p$. By Corollary~\ref{C:increasing_inf}
one has $\inf(\overleftarrow{c_i}\cdots \overleftarrow{c_1} u c_1\cdots c_i)=i$ for $i=1,\ldots,2p$.  In particular
$\inf(\overleftarrow{c_2}\overleftarrow{c_1} u c_1c_2)=2$, hence $\overleftarrow{c_2}\overleftarrow{c_1} u c_1c_2 =
\Delta w'\Delta$ for some positive braid $w'$.

Multiplying the above equality on the right by $c_2^{-1}$ and on the left by its reverse, we obtain
$\overleftarrow{c_1} u c_1 = \overleftarrow{\partial^{-1}(c_2)}  w' \partial^{-1}(c_2)$. Hence $u$ and $w'$ are simply
$\varepsilon$-twisted conjugated. But $w'$ is not necessarily palindromic-free, and one does not necessarily have
$\ell(w')\leq r$. Let us see that we can replace $w'$ by some $w$ that satisfies the required hypothesis.

Recall that $\overleftarrow{c_2}\overleftarrow{c_1} u c_1c_2 = \Delta w'\Delta$. Since the left hand side is a product
of at most $r+4$ simple elements, it follows that $\sup(w')\leq r+2$. Moreover, multiplying each side of the equality,
from the left, by its image under $\varepsilon$, one has
 $$
\left(\varepsilon(\overleftarrow{c_2}\overleftarrow{c_1} u c_1c_2)\right) \left( \overleftarrow{c_2}\overleftarrow{c_1}
u c_1c_2 \right)= \left(\varepsilon(\Delta w'\Delta)\right)\left(\Delta w'\Delta\right).
 $$
Hence:
 $$
\left(c_2^{-1} c_1^{-1} \varepsilon(u) \varepsilon(c_1) \varepsilon(c_2) \right) \left(\overleftarrow{c_2}
\overleftarrow{c_1} u c_1c_2 \right)= \left(\Delta^{-1} \varepsilon(w') \Delta^{-1} \right)\left(\Delta w' \Delta
\right).
 $$
Since $\varepsilon(c_i)= (\overleftarrow{c_i})^{-1}$, one obtains:
 $$
c_2^{-1} c_1^{-1} (\varepsilon(u)u) c_1 c_2 = \tau(\varepsilon(w')w').
 $$
In the same way, from the equality
 $$
\left(\overleftarrow{c_{2p}} \cdots \overleftarrow{c_1}\right) u \left( c_1 \cdots c_{2p} \right) = \Delta^p v
\Delta^p,
 $$
one gets:
 $$
\left( c_{2p}^{-1} \cdots c_1^{-1}\right) \varepsilon(u)u \left( c_1 \cdots c_{2p}\right) = \tau^{p}(\varepsilon(v)v).
 $$
Recall that $\ell(u)\leq r$ and $\ell(v)\leq r$, so by Corollary~\ref{C:pal-free_double_length} one has
$\ell(\varepsilon(u)u)\leq 2r$ and $\ell(\varepsilon(v)v)\leq 2r$,  thus $\ell(\tau^p(\varepsilon(v)v))\leq 2r$.
Therefore, by Theorem~\ref{T:EM}, $\ell(\tau(\varepsilon(w')w'))= \ell(c_2^{-1} c_1^{-1} \varepsilon(u)u c_1 c_2)\leq
2r$. Hence $\ell(\varepsilon(w')w')\leq 2r$.

We claim that there are positive braids $x$ and $w$, such that $w'=\overleftarrow{x}wx$ and $\sup(w)\leq r$. First, if
$\sup(w')\leq r$ one can take $x=1$ and $w=w'$. Second, if $\sup(w')=r+1$, notice that $r\geq 1$ and
$\ell(\varepsilon(w')w')\leq 2r$, so the claim follows from Lemma~\ref{L:sup_and_palindromic}, taking $k=1$. We must
then show the claim in the case $\sup(w')=r+2$.

Suppose that $\sup(w')=r+2$, and recall that $\ell(\varepsilon(w')w')\leq 2r = 2\sup(w')-4$. If $\sup(w')\geq 4$, the
claim follows from Lemma~\ref{L:sup_and_palindromic}, taking $k=2$. Therefore the only remaining case is $\sup(w')=3$,
$r=1$ and $\ell(\varepsilon(w')w')\leq 2$.  Let $d=w'\wedge \mbox{rev}(w')$ and write $w'=d \alpha $ and
$\mbox{rev}(w')=d \beta$. Notice that $\varepsilon(w')w' = \mbox{rev}(w')^{-1} w' = \beta^{-1}d^{-1}d\alpha$, hence the
mixed normal form of $\varepsilon(w')w'$ is precisely $\beta^{-1}\alpha$.  Moreover, since the word length of $w'$ and
$\mbox{rev}(w')$ coincide, one has $\alpha=1$ if and only if $\beta=1$. Hence, since $\sup(\alpha)+\sup(\beta)
=\ell(\varepsilon(w')w')  \leq 2$, one must necessarily have either $\sup(\alpha)=\sup(\beta)=0$ or
$\sup(\alpha)=\sup(\beta)=1$, that is, $\alpha$ and $\beta$ are (possibly trivial) simple elements.

Write $w'=a_1a_2a_3$ in left normal form. The right normal form of $\mbox{rev}(w')$ is then $\overleftarrow{a_3}
\overleftarrow{a_2} \overleftarrow{a_1}$.  Since $\mbox{rev}(w')=d\beta$ and $\beta$ is simple, it follows that
$\overleftarrow{a_3}\overleftarrow{a_2}\preccurlyeq d$, hence $\overleftarrow{a_3}\overleftarrow{a_2}\preccurlyeq
d\alpha = w' = a_1a_2a_3$. Since the latter decomposition is in left normal form, one has $\overleftarrow{a_3}
\overleftarrow{a_2} \preccurlyeq a_1 a_2$, and also $\overleftarrow{a_3}\preccurlyeq a_1$. Write then $w'=
\overleftarrow{a_3} (c a_2)a_3$ for some positive $c$. Now if $c a_2$ is simple we are done, as one can take $x=a_3$
and $w=ca_2$. Otherwise, write $ca_2= b_1b_2$ in left normal form, and recall that $\overleftarrow{a_3}
\overleftarrow{a_2} \preccurlyeq a_1 a_2$, so $\overleftarrow{a_2}\preccurlyeq c a_2 = b_1b_2$. Then
$\overleftarrow{a_2}\preccurlyeq b_1$. On the other hand, since $ca_2=b_1b_2$ and the latter decomposition is left
weighted, one has $a_2 \succcurlyeq b_2$ and then $\overleftarrow{b_2}\preccurlyeq \overleftarrow{a_2}$. Concatenating
the last two inequalities, one finally obtains $\overleftarrow{b_2}\preccurlyeq \overleftarrow{a_2} \preccurlyeq b_1$.
Therefore one can write $b_1= \overleftarrow{b_2}w$ for some simple element $w$, and one has $w'= (\overleftarrow{a_3}
\overleftarrow{b_2}) w (b_2 a_3)$. Taking $x=b_2a_3$, the claim is shown.

Notice that if $w$ is not palindromic-free, we can still decompose $w=\overleftarrow{y}w''y$ where $y$ is positive and
$w''$ is palindromic-free. Moreover, $\sup(w'')\leq \sup(w)\leq r$. Therefore, replacing $x$ by  $yx$ and $w$ by $w''$
if necessary, we can assume that $w'=\overleftarrow{x}wx$, where $x$ is positive and $w$ is palindromic-free with
$\sup(w)\leq r$.

Now recall that $\overleftarrow{c_1} u c_1 = \overleftarrow{\partial^{-1}(c_2)}  w' \partial^{-1}(c_2)$ for simple
elements $c_1$ and $c_2$. By the above claim, $\overleftarrow{c_1} u c_1 = \left(\overleftarrow{\partial^{-1}(c_2)}
\overleftarrow{x}\right) w \left(x \partial^{-1}(c_2)\right)$. Multiplying this equality from the right by
$\left(c_1\wedge_R \left(x \partial^{-1}(c_2)\right)\right)^{-1}$ and from the left by its reverse, we obtain
$\overleftarrow{s}us = \overleftarrow{t}wt$ for positive braids $s$ and $t$ such that $s\wedge_R t=1$. Now $s$ is
simple as it is a prefix of $c_1$, hence $t$ is simple by Corollary~\ref{C:equal_length}. Therefore $u$ and $w$ are
simply $\varepsilon$-twisted conjugated, positive, palindromic-free braids, whose canonical length is at most $r$. This
is the first step of our required chain.

Now notice that
\begin{eqnarray*}
\overleftarrow{c_{2p}} \cdots \overleftarrow{c_3} \Delta  w' \Delta c_3\cdots c_{2p}
& = & \overleftarrow{c_{2p}} \cdots \overleftarrow{c_3} \overleftarrow{c_2} \overleftarrow{\partial^{-1}(c_2)}  w' \partial^{-1}(c_2) c_2 c_3\cdots c_{2p} \\
& = &  \overleftarrow{c_{2p}} \cdots \overleftarrow{c_2} \overleftarrow{c_1}  u c_1 c_2\cdots c_{2p} \\
& = & \Delta^p   v \Delta^p.
\end{eqnarray*}
Hence
 $$
{\tau^{-1}(\overleftarrow{c_{2p}}\cdots \overleftarrow{c_{3}})} \: w' \: \tau^{-1}(c_3\cdots c_{2p}) = \Delta^{p-1} v
\Delta^{p-1}.
 $$
For simplicity, we will denote $d_i=\tau^{-1}(c_{i+2})$ for $i=1,\ldots,2p-2$. Hence we have:
 $$
\left(\overleftarrow{d_{2p-2}}\cdots \overleftarrow{d_1}\right) \: w' \: \left(d_1\cdots d_{2p-2} \right)= \Delta^{p-1}
v \Delta^{p-1}.
 $$
Recalling that $w'=\overleftarrow{x}wx$, and multiplying the above equality from the right by $(d_p\cdots
d_{2p-2})^{-1}$ and from the left by its reverse, we finally obtain:
 $$
\left(\overleftarrow{d_{p-1}}\cdots \overleftarrow{d_1} \overleftarrow{x}\right) \: w \: \left(x d_1\cdots d_{p-1}
\right)= \left(\overleftarrow{e_{p-1}}\cdots \overleftarrow{e_1}\right) \: v \: \left(e_1\cdots e_{p-1} \right),
 $$
where $e_1,\ldots, e_{p-1}$ are simple elements and $e_1\cdots e_{p-1} = \Delta^{p-1} \left(d_p\cdots
d_{2p-2}\right)^{-1}$. Reducing the above equality, if necessary, by the biggest common suffix of $\left(x d_1\cdots
d_{p-1} \right)$ and $\left(e_1\cdots e_{p-1} \right)$, it follows that there exist positive braids $y$ and $z$ such
that $y\wedge_R z=1$, and $\overleftarrow{y} w y = \overleftarrow{z} v z$. Recall that $w$ and $v$ are palindromic-free
and, by Corollary~\ref{C:equal_length} and as $z$ is a prefix of $e_1\cdots e_{p-1}$, $\ell(y)=\ell(z)\leq p-1$.
Therefore the induction hypothesis provides the remaining part of the required chain, and the result is shown.
\end{proof}

\begin{corollary}\label{bottom} Let $u,v\in B_n$ be positive,
palindromic-free, $\varepsilon$-twisted conjugated braids of minimal canonical length in their $\varepsilon$-twisted
conjugacy class, say $r=\ell(u)=\ell(v)$. Then there is a chain $u=u_0, u_1,\ldots, u_k=v$ of positive,
palindromic-free braids, with canonical length $\ell(u_i)=r$ for all $i$, such that $u_{i-1}$ is simply
$\varepsilon$-twisted conjugated to $u_i$, for $i=1,\ldots,k$.
\end{corollary}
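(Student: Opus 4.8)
The plan is to obtain Corollary~\ref{bottom} as an essentially immediate consequence of Theorem~\ref{T:twisted_EM}, using the minimality hypothesis to promote the inequality $\ell(u_i)\leq r$ to an equality. First I would apply Theorem~\ref{T:twisted_EM} directly to the pair $u,v$: since both are positive, palindromic-free, $\varepsilon$-twisted conjugated, and $\ell(u)=\ell(v)=r$, the theorem produces a chain $u=u_0,u_1,\ldots,u_k=v$ of positive, palindromic-free braids with $\ell(u_i)\leq r$ for every $i$, in which each consecutive pair $u_{i-1},u_i$ is simply $\varepsilon$-twisted conjugated.

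Next I would observe that every $u_i$ lies in the $\varepsilon$-twisted conjugacy class of $u$. Indeed, a simple $\varepsilon$-twisted conjugation is in particular an $\varepsilon$-twisted conjugation, and $\sim_{\varepsilon}$ is an equivalence relation, so $u\sim_{\varepsilon} u_1\sim_{\varepsilon}\cdots\sim_{\varepsilon} u_i$ gives $u\sim_{\varepsilon} u_i$. Hence each $u_i$ is a positive, palindromic-free braid $\varepsilon$-twisted conjugated to $u$, i.e.\ $u_i\in MPF(u)\cup(\text{larger-length palindromic-free conjugates})$; since by hypothesis $r$ is the minimal canonical length attained by positive, palindromic-free braids in this class (equivalently, $u,v\in MPF(u)$ and $r$ is the common canonical length of the elements of $MPF(u)$), we get $\ell(u_i)\geq r$ for all $i$.

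Combining the two bounds gives $\ell(u_i)=r$ for every $i$, which is exactly the assertion of the corollary. I do not expect a genuine obstacle here: the entire substantive content is already carried by Theorem~\ref{T:twisted_EM} (and, beneath it, by Corollaries~\ref{C:equal_length} and~\ref{C:increasing_inf}); the only thing to notice is that minimality of $r$ within the class prevents the canonical length from dropping anywhere along the chain, so the ``$\leq r$'' furnished by Theorem~\ref{T:twisted_EM} is forced to be ``$=r$'' in this situation. The one point worth stating carefully is the reading of ``minimal canonical length in their $\varepsilon$-twisted conjugacy class'': it means the minimum of $\ell$ over positive, palindromic-free representatives of the class, which is precisely the quantity needed for the inequality $\ell(u_i)\geq r$.
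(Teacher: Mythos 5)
Your proposal is correct and is precisely the argument the paper intends: the corollary is stated without proof because it follows immediately from Theorem~\ref{T:twisted_EM} in exactly the way you describe, with the minimality of $r$ among positive, palindromic-free representatives of the class forcing the bound $\ell(u_i)\leq r$ to become an equality. Your careful reading of ``minimal canonical length'' as the minimum over positive, palindromic-free representatives matches the definition of $MPF(x)$ and is the right one.
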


\begin{corollary}\label{computeMPF}
There exists an algorithm to compute $MPF(x)$ for any given braid $x\in B_n$.
\end{corollary}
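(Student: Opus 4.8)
The plan is to turn $MPF(x)$ into the set of vertices of a prescribed canonical length in an explicitly computable finite graph, using Theorem~\ref{T:twisted_EM} (equivalently, Corollary~\ref{bottom}) to guarantee that a graph search is exhaustive. The elementary subroutines needed are all effective once we pass to left normal forms: from a word in the $\sigma_i^{\pm 1}$ we can compute the normal form, hence decide positivity (equivalently, $\inf\geq 0$), compute $\inf$, $\sup$ and $\ell$, and decide whether a positive braid $y$ is palindromic-free by testing, for each $i=1,\dots ,n-1$, whether $\sigma_i^{-1}y\sigma_i^{-1}$ is positive (Definition~\ref{D:palindromic_free}). Moreover, given a braid $u$ and two simple elements $a,b$, the only possible $v$ with $\overleftarrow{a}ua=\overleftarrow{b}vb$ is $v=(\overleftarrow{b})^{-1}\overleftarrow{a}\,u\,a\,b^{-1}$, which we can form and then test; since there are only $(n!)^2$ pairs $(a,b)$ of simple elements, the simple $\varepsilon$-twisted conjugation ``neighbours'' of any braid are computable.

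First I would produce one positive, palindromic-free braid $u_0$ which is $\varepsilon$-twisted conjugate to $x$: this is exactly the construction in the proof of Proposition~\ref{P:MPF_nonempty} --- multiply $x$ on the left and right by $\Delta^p=\overleftarrow{\Delta^p}$ for $p$ large enough to obtain a positive braid, then, while the current braid has the form $\sigma_i y \sigma_i$, replace it by $y$; the word length strictly decreases, so this halts at a positive, palindromic-free $u_0$. Set $N=\ell(u_0)$ and let $V$ be the set of positive, palindromic-free braids of canonical length at most $N$; it is finite because palindromic-freeness forces $\inf=0$, so every element of $V$ is a product of at most $N$ simple elements. Now run a breadth-first search in the graph with vertex set $V$ in which $u$ and $v$ are joined when they are simply $\varepsilon$-twisted conjugated, starting from $u_0$, generating neighbours by the formula above and keeping only those lying in $V$. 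Let $R\subseteq V$ be the set of vertices visited.

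I claim $R$ equals the set of all positive, palindromic-free braids that are $\varepsilon$-twisted conjugate to $x$ and have canonical length $\leq N$. One inclusion is clear. For the other, if $v$ is such a braid then $u_0$ and $v$ are positive, palindromic-free, $\varepsilon$-twisted conjugate, with $\ell(u_0)=N$ and $\ell(v)\leq N$, so Theorem~\ref{T:twisted_EM} (with $r=N$) gives a chain of simple $\varepsilon$-twisted conjugations from $u_0$ to $v$ through positive, palindromic-free braids of length $\leq N$, i.e. through $V$, so the search reaches $v$. Since $u_0$ itself is such a braid, the minimal canonical length $r^{*}$ among all positive, palindromic-free $\varepsilon$-twisted conjugates of $x$ satisfies $r^{*}\leq N$, hence every braid realising it lies in $R$, so the algorithm can read off $r^{*}=\min\{\ell(v):v\in R\}$ and then output
$$
MPF(x)=\{\,v\in R : \ell(v)=r^{*}\,\}.
$$
The search terminates since $V$ is finite and each vertex has finitely many neighbours. (Alternatively, one may first use Theorem~\ref{T:twisted_EM} to reduce to a single element of minimal length $r^{*}$ and then invoke Corollary~\ref{bottom}, which identifies $MPF(x)$ with one connected component of the graph on vertices of length exactly $r^{*}$; this amounts to the same computation.)

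The substantive mathematics has already been carried out in Theorem~\ref{T:twisted_EM} and its corollaries; what remains, and must be checked with care, is: (a) that every object manipulated lies in an effectively recognisable finite pool --- finiteness of the set of simple elements and of $V$, and decidability of positivity, of $\ell$, and of palindromic-freeness; and (b) the completeness of the search, i.e. that a braid missed by the breadth-first exploration cannot be $\varepsilon$-twisted conjugate to $x$ while having canonical length $\leq N$. Point (b) is the only genuinely nontrivial input, and it is exactly the content of Theorem~\ref{T:twisted_EM}.
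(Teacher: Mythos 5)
Your proof is correct and follows essentially the same route as the paper: both reduce the computation to an exhaustive search of the graph of simple $\varepsilon$-twisted conjugations among positive, palindromic-free braids, with Theorem~\ref{T:twisted_EM} (and Corollary~\ref{bottom}) guaranteeing that the search is complete. The only difference is cosmetic --- the paper dynamically lowers the length threshold and restarts whenever a shorter element is found, whereas you search once at the fixed threshold $N=\ell(u_0)$ and filter by minimal canonical length at the end.
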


\begin{proof}
If $x=1$ then $MPF(x)=\{ 1\}$. So, let us assume $x\neq 1$.

First of all, compute a positive, palindromic-free, $\varepsilon$-twisted conjugate of $x$, say $y$, as it is explained
in Proposition~\ref{P:MPF_nonempty}. Let $r=\ell (y)\geqslant 1$, and let $S=\{ y \} \subset B_n$.

Now, consider the following operation, which will have to be subsequently applied  until all elements in $S$ have been
processed:

\emph{Choose $z\in S$ which has not been processed, compute all positive palindromic-free elements which are simply
$\varepsilon$-twisted conjugated to $z$ and have canonical length less than or equal to $r$ (this is clearly a finite,
computable set), and then do the following: 1) if one of them, say $z'$, has length less than $r$, kill the whole
process, reset $y=z'$, $S=\{ z'\}$, $r=\ell(z')$ and start the algorithm again; 2) otherwise, add to $S$ all the
computed elements (which have canonical length exactly equal to $r$), and mark $z$ as processed.}

At each application of such operation, either the set $S$ gets restarted and $r$ strictly decreased, or the set $S$
gets increased by the addition of the new elements computed (some of which could already be present in the former $S$).
But $r\geqslant 1$ can only decrease a finite number of times, and $|S|$ can only increase a finite number of times,
since the number of braids with infimum zero and given canonical length is finite (recall that palindromic-free
elements have infimum zero).

Hence, after a finite number of applications of the previous operation (running over all elements $z\in S$), we shall
get a set $S\neq \emptyset$ closed under this operation, i.e. such that when applying that operation to any $z\in S$
the set neither gets restarted nor gets increased (that is, all the elements computed are already present in $S$). At
this time, Theorem~\ref{T:twisted_EM} implies that the canonical length of the elements in $S$ (which is constant) is
the smallest possible among all positive palindromic-free braids which are $\varepsilon$-twisted conjugated to $x$.
That is, $S\subseteq MPF(x)$.

Now, let $u\in MPF(x)$. Choosing an arbitrary $v\in S$, Corollary~\ref{bottom} tells us that $u$ and $v$ are connected
by a chain of positive, palindromic-free braids of minimal canonical length, each simply $\varepsilon$-twisted
conjugated to the following one. Hence, by construction of $S$, we have $u\in S$. Therefore, $S=MPF(x)$.
\end{proof}

\begin{theorem}\label{tcp}
The twisted conjugacy problem is solvable in the braid group $B_n$.
\end{theorem}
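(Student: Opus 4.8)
The plan is to obtain Theorem~\ref{tcp} as a short consequence of Corollary~\ref{computeMPF} together with the Dyer--Grossman description of $Aut(B_n)$ in Theorem~\ref{out}; the point is to reduce the $\varphi$-twisted conjugacy problem, for an arbitrary $\varphi$, to the $\varepsilon$-twisted one.

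First I would fix how the input automorphism is presented, say by the words $\varphi(\sigma_1),\dots,\varphi(\sigma_{n-1})$. By Theorem~\ref{out}, $\varphi$ is of one of the two forms $w\mapsto g^{-1}wg$ or $w\mapsto g^{-1}\varepsilon(w)g$ for some $g\in B_n$; write these as $\gamma_g$ and $\gamma_g\circ\varepsilon$. To decide which case occurs and to produce an explicit such $g$, I would first abelianise: inner automorphisms act trivially on $B_n^{\mathrm{ab}}\cong\mathbb Z$ while $\varepsilon$ acts as $-1$, so the exponent sum of $\varphi(\sigma_1)$ (necessarily $\pm1$) distinguishes the two cases. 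Then, with the case known, enumerate the elements $g$ of $B_n$ and, using solvability of the word problem in $B_n$, test whether the associated automorphism agrees with $\varphi$ on every $\sigma_i$ (equivalently, on all of $B_n$); Theorem~\ref{out} guarantees that some $g$ passes the test, so this search terminates with a concrete $g$. (Alternatively $g$ can be produced constructively from a conjugator-returning solution of the conjugacy problem together with computability of centralizers in $B_n$; and if $\varphi$ is already supplied in the form $\gamma_g$ or $\gamma_g\circ\varepsilon$, this step is vacuous.)

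Now the reduction. If $\varphi=\gamma_g$, then for $x\in B_n$ the equation $v=(\varphi(x))^{-1}ux=g^{-1}x^{-1}gux$ is equivalent to $gv=x^{-1}(gu)x$; hence $u\sim_\varphi v$ if and only if $gu$ and $gv$ are conjugate in $B_n$, which is decidable by Theorem~\ref{cpbn}. If $\varphi=\gamma_g\circ\varepsilon$, then $v=(\varphi(x))^{-1}ux=g^{-1}(\varepsilon(x))^{-1}gux$ is equivalent to $gv=(\varepsilon(x))^{-1}(gu)x$; hence $u\sim_\varphi v$ if and only if $gu\sim_\varepsilon gv$. To decide this last relation I would compute $MPF(gu)$ and $MPF(gv)$ using Corollary~\ref{computeMPF} and check whether they coincide: $MPF(\cdot)$ is a nonempty finite invariant of the $\varepsilon$-twisted conjugacy class, and every element of $MPF(z)$ is $\varepsilon$-twisted conjugate to $z$, so $gu\sim_\varepsilon gv$ if and only if $MPF(gu)=MPF(gv)$. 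Outputting the answer accordingly completes the algorithm.

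I do not expect a serious obstacle here: all the genuine difficulty has already been absorbed into Corollary~\ref{computeMPF} (finiteness and computability of $MPF$). Within this proof the only step needing a little care is making the passage from an arbitrarily presented $\varphi$ to an explicit $g$ and case effective, which the search above handles cleanly using only Theorem~\ref{out} and the word problem in $B_n$; after that, everything is the routine ``left-translate by $g$'' trick that converts $\varphi$-twisted conjugacy into ordinary or $\varepsilon$-twisted conjugacy.
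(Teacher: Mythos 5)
Your proposal is correct and follows essentially the same route as the paper: reduce via Theorem~\ref{out} to the two cases $\varphi=\gamma_g$ and $\varphi=\gamma_g\circ\varepsilon$, left-translate by $g$ to convert the problem into ordinary conjugacy (Theorem~\ref{cpbn}) or $\varepsilon$-twisted conjugacy, and settle the latter by computing and comparing $MPF(gu)$ and $MPF(gv)$. The only (harmless) difference is in how you make the case distinction and find $g$: you use the abelianization to separate the two cases and then a brute-force search certified by Theorem~\ref{out}, whereas the paper's primary method is to solve two instances of the multiple simultaneous conjugacy problem, mentioning your enumeration only as an alternative.
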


\begin{proof}
Suppose we are given an automorphism $\varphi \colon B_n \to B_n$ (by the images of the generators), and two braids
$u,v\in B_n$. We have to decide whether there exists $x\in B_n$ such that $v=(\varphi (x))^{-1}ux$, and in the positive
case compute such an $x$.

By Theorem~\ref{out}, either $\varphi$ is a conjugation ($\varphi =\gamma_{w}$ for some $w\in B_n$), or it is
$\varepsilon$ followed by a conjugation ($\varphi =\gamma_{w}\varepsilon$ for some $w\in B_n$). We can clearly make
this decision effective, and compute such a $w$. Indeed, in order to check whether $\varphi=\gamma_w$, we need to find
some braid $w$ such that $w^{-1}\sigma_i w=\varphi(\sigma_i)$ for $i=1,\ldots,n-1$. This is an instance of the
so-called {\it multiple simultaneous conjugacy problem} in $B_n$, and algorithms to solve it (and to find such $w$) can
be found in~\cite{Lee_Lee,GM}. On the other hand, checking whether $\varphi=\gamma_{w}\varepsilon$ and finding such $w$
reduces to solving another instance of the multiple simultaneous conjugacy problem in $B_n$: namely, it amounts to find
$w$ such that $w^{-1}\sigma_i^{-1} w=\varphi(\sigma_i)$ for $i=1,\ldots,n-1$. (Alternatively, in our specific
situation, we can make the following conceptually much easier brute force algorithm: knowing, by Theorem~\ref{out},
that there exists $w\in B_n$ such that either $w^{-1}\sigma_i w=\varphi(\sigma_i)$ for $i=1,\ldots,n-1$, or
$w^{-1}\sigma_i^{-1} w=\varphi(\sigma_i)$ for $i=1,\ldots,n-1$, one can always enumerate all words $w\in B_n$ and keep
checking both conditions until finding the good one with the correct $w$.) We can therefore assume that $w$ is known,
and that $\varphi$ is equal either to $\gamma_w$ or to $\gamma_w \varepsilon$.

In the first case $\varphi(x)=w^{-1}xw$, and the equation $v=(\varphi (x))^{-1}ux$ is equivalent to $wv=x^{-1}(wu)x$.
Deciding the existence of such an $x$ and finding it, is just an instance of the standard conjugacy problem in $B_n$
(applied to $wv$ and $wu$), which is well-known to be solvable, see Theorem~\ref{cpbn}.

In the second case, $\varphi(x)=w^{-1}\varepsilon(x)w$, and the equation $v=(\varphi (x))^{-1}ux$ is equivalent to
$wv=(\varepsilon(x))^{-1}(wu)x =\overleftarrow{x}(wu)x$. Deciding the existence of such an $x$ and finding it, is an
instance of the $\varepsilon$-twisted conjugacy problem in $B_n$ (applied to $wv$ and $wu$), which can be solved by
computing the sets $MPF(wu)$ and $MPF(wv)$ (see Corollary~\ref{computeMPF}) and checking whether they coincide or not
(meaning that $wu$ and $wv$ are or are not $\varepsilon$-twisted conjugated, respectively). Notice that, during the
computations of $MPF(wu)$ and $MPF(wv)$, we can keep track of a $\varepsilon$-twisted conjugating element at each step,
so that we can explicitly find a value for $x$ in the case it exists.

We remark that the full computation of the sets $MPF(wu)$ and $MPF(wv)$ will usually not be necessary. We can start the
construction of both sets simultaneously, and kill the whole process giving a positive answer, as soon as we find an
element $z$ in common in both sets (since, in this case, both $wu$ and $wv$ are $\varepsilon$-twisted conjugated to
$z$, an so to each other).
\end{proof}

\section{The conjugacy problem for some extensions of $B_n$.}

\begin{theorem}\label{od}
Every finitely generated subgroup $A\leqslant Aut(B_n)$ is orbit decidable.
\end{theorem}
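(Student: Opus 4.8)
The plan is to reduce orbit decidability of a finitely generated subgroup $A \leqslant Aut(B_n)$ to the (already solved) $\varepsilon$-twisted conjugacy problem, using the structural description of $Aut(B_n)$ provided by Theorem~\ref{out}. Recall that $Aut(B_n) = Inn(B_n) \sqcup Inn(B_n)\cdot\varepsilon$; equivalently, the map $Aut(B_n) \to \mathbb{Z}/2\mathbb{Z}$ sending $Inn(B_n)$ to $0$ and $Inn(B_n)\varepsilon$ to $1$ is a surjective homomorphism. Given $A$ by a finite generating set $\{\alpha_1,\dots,\alpha_m\}$, I would first determine, for each $\alpha_j$, whether it lies in $Inn(B_n)$ or in $Inn(B_n)\varepsilon$; as explained in the proof of Theorem~\ref{tcp}, this is decidable (via the multiple simultaneous conjugacy problem, or brute force), and we can compute a braid $w_j$ with $\alpha_j = \gamma_{w_j}$ in the first case and $\alpha_j = \gamma_{w_j}\varepsilon$ in the second.

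There are then two cases according to whether $A \leqslant Inn(B_n)$ or not. If every $\alpha_j$ is inner, then $A \leqslant Inn(B_n)$, and for $u,v \in B_n$ the condition "$v \sim \alpha(u)$ for some $\alpha \in A$" becomes simply "$v \sim u$" (conjugating by an inner automorphism does not change the conjugacy class), so orbit decidability reduces to the conjugacy problem in $B_n$, solved by Theorem~\ref{cpbn}. If some $\alpha_j$ lies in $Inn(B_n)\varepsilon$, then $A$ contains elements of both types, so $A \cap Inn(B_n)$ has index $2$ in $A$, and fixing one such $\alpha_{j_0} = \gamma_{w_0}\varepsilon$ we have $A = (A\cap Inn(B_n)) \sqcup (A\cap Inn(B_n))\alpha_{j_0}$. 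Now $v \sim \alpha(u)$ for some $\alpha \in A$ holds if and only if either $v \sim u$ (using some $\alpha \in A \cap Inn(B_n)$, which acts trivially on conjugacy classes), or $v \sim \beta(\alpha_{j_0}(u))$ for some inner $\beta$, i.e. $v \sim \alpha_{j_0}(u) = w_0^{-1}\varepsilon(u)w_0$. The first alternative is decided by the conjugacy problem. For the second, note $v \sim w_0^{-1}\varepsilon(u)w_0$ iff $v \sim \varepsilon(u)$ (conjugating by $w_0$), and $v \sim \varepsilon(u)$ iff there is $x$ with $v = x^{-1}\varepsilon(u)x$; this is precisely an $\varepsilon$-twisted conjugacy question after the substitution done in Theorem~\ref{tcp} — indeed $v \sim \varepsilon(u)$ is equivalent to $\varepsilon(v)$ being $\varepsilon$-twisted conjugate to $u$, or more directly one checks $v \sim \varepsilon(u) \iff MPF(u) $ and $MPF(\varepsilon(v))$ interact appropriately — which is decidable by Corollary~\ref{computeMPF}. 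Running both decision procedures and returning "yes" if either succeeds settles the orbit decidability of $A$.

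The only genuinely delicate point is the bookkeeping in the reduction "$v \sim \varepsilon(u)$": one must be careful that standard conjugacy composed with the outer automorphism $\varepsilon$ is captured exactly by the $\varepsilon$-twisted conjugacy machinery. Concretely, $v \sim_{\mathrm{conj}} \varepsilon(u)$ means $\exists x: v = x^{-1}\varepsilon(u)x = x^{-1}\varepsilon(u)x$; applying $\varepsilon$ to both sides gives $\varepsilon(v) = \varepsilon(x)^{-1} u \, \varepsilon(x) = \overleftarrow{\varepsilon(x)}^{-1}\cdot$ — so writing $y = \varepsilon(x)$ this says $\varepsilon(v) = (\varepsilon(y))^{-1} u y$ is false; rather the right reformulation is that $v \sim \varepsilon(u)$ iff $u$ and $\varepsilon(v)$ lie in the same $\varepsilon$-twisted conjugacy class, which is tested by $MPF(u) = MPF(\varepsilon(v))$. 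I expect this translation — and checking it handles the generating set of $A$ correctly regardless of how the individual $\alpha_j$'s distribute between the two cosets — to be the main obstacle; everything else is an immediate appeal to Theorem~\ref{cpbn}, Theorem~\ref{out}, and Corollary~\ref{computeMPF}.
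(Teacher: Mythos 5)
Your reduction is on the right track and, for most of its length, coincides with the paper's argument: decide for each generator whether it is inner or inner composed with $\varepsilon$, and observe that the orbit $\{\alpha(u)\mid\alpha\in A\}$ consists of conjugates of $u$ alone (if $A\leqslant Inn(B_n)$) or of conjugates of $u$ and of $\varepsilon(u)$ (otherwise), so that orbit decidability reduces to deciding whether $v\sim u$ or $v\sim\varepsilon(u)$. The gap is in how you treat the second disjunct. Since $\varepsilon(u)$ is an explicitly computable braid (invert every letter of a word representing $u$), the question ``is $v$ conjugate to $\varepsilon(u)$?'' is simply a second instance of the ordinary conjugacy problem in $B_n$, settled by Theorem~\ref{cpbn}; no twisted conjugacy is needed anywhere in this proof. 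Instead you try to recast it as an $\varepsilon$-twisted conjugacy question, asserting that $v\sim\varepsilon(u)$ if and only if $u$ and $\varepsilon(v)$ lie in the same $\varepsilon$-twisted conjugacy class. That equivalence is false. Applying $\varepsilon$ to $v=x^{-1}\varepsilon(u)x$ gives $\varepsilon(v)=\varepsilon(x)^{-1}u\,\varepsilon(x)$, which is an ordinary conjugation of $u$, not a twisted one: the twisted relation would require $\varepsilon(v)=(\varepsilon(y))^{-1}uy=\overleftarrow{y}uy$, and the two relations do not even constrain the abelianization in the same way. Concretely, take $u=\sigma_1$ and $v=\sigma_1^{-3}$: then $u$ and $\varepsilon(v)=\sigma_1^{3}=\overleftarrow{\sigma_1}\,\sigma_1\,\sigma_1$ are $\varepsilon$-twisted conjugate, yet $v$ is not conjugate to $\varepsilon(u)=\sigma_1^{-1}$, since ordinary conjugation preserves exponent sum. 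You half-notice the difficulty yourself (``\dots is false; rather the right reformulation is\dots'') but never verify the replacement, and no correct replacement of this form exists.

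The repair is immediate and is exactly what the paper does: run the conjugacy algorithm of Theorem~\ref{cpbn} twice, once on the pair $(v,u)$ and once on the pair $(v,\varepsilon(u))$, and answer yes if either succeeds. The $MPF$ machinery of Corollary~\ref{computeMPF} is needed only for the twisted conjugacy problem (Theorem~\ref{tcp}), i.e., for hypothesis (i) of Theorem~\ref{ses}; it plays no role in orbit decidability.
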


\begin{proof}
Let $\varphi_1,\ldots ,\varphi_m \in Aut(B_n)$ be given, and consider $A=\langle \varphi_1,\ldots ,\varphi_m\rangle
\leqslant Aut(B_n)$. For every $i=1,\ldots ,m$, compute $w_i \in B_n$ and $\epsilon_i =0,1$ such that $\varphi_i
=\gamma_{w_i}\varepsilon_i^{\epsilon_i}$ (see the first part of the proof of Theorem~\ref{tcp}).

Given two braids $u,v\in B_n$ we have to decide whether or not $v$ is conjugated to $\alpha(u)$ for some $\alpha \in
A$. If $\epsilon_i=0$ for every $i$, then $A\leqslant Inn(B_n)$ and so, the set $\{ \alpha(u) \mid \alpha \in A \}$ is
a certain collection of conjugates of $u$. In this case, our problem just consists on deciding whether or not $v$ is
conjugated to $u$. This is doable by Theorem~\ref{cpbn}.

Otherwise, the set $\{ \alpha(u) \mid \alpha \in A \}$ is a certain collection of conjugates of $u$ and of
$\varepsilon(u)$. In this case, our problem just consists on deciding whether or not $v$ is conjugated to either $u$ or
$\varepsilon(u)$. This is again doable by two applications of Theorem~\ref{cpbn}.
\end{proof}

The following theorem (and the interesting particular case expressed in the corollary below) are immediate consequences
of Theorems~\ref{ses}, \ref{tcp}, and \ref{od}.

\begin{theorem}\label{cp}
Let $G=B_n \rtimes H$ be an extension of the braid group $B_n$ by a finitely generated group $H$ satisfying conditions
(ii) and (iii) above (for instance, take $H$ torsion-free hyperbolic). Then, $G$ has solvable conjugacy problem. $\Box$
\end{theorem}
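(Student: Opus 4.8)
The plan is to deduce Theorem~\ref{cp} directly from Theorem~\ref{ses} by verifying its three hypotheses for the short exact sequence
$$
1 \longrightarrow B_n \stackrel{\alpha}{\longrightarrow} G \stackrel{\beta}{\longrightarrow} H \longrightarrow 1
$$
arising from the semidirect product $G = B_n \rtimes H$. First I would observe that this sequence is algorithmic: since $G = B_n \rtimes H$, elements of $G$ are pairs, multiplication and inversion are computed from those in $B_n$, $H$, and the action of $H$ on $B_n$; the map $\beta$ is the projection to $H$ and $\alpha$ is the inclusion of $B_n$, so images under $\alpha$ and $\beta$ are immediate; a pre-image in $G$ of $h \in H$ is just $(1,h)$; and a pre-image in $B_n$ of an element of $G$ lying in $\ker\beta$ is read off from its first coordinate. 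So requirement (i)–(iii) of ``algorithmic'' hold trivially.

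Next I would check the three itemized hypotheses of Theorem~\ref{ses} with $F = B_n$. Hypothesis~(i), that $F = B_n$ has solvable twisted conjugacy problem, is exactly Theorem~\ref{tcp}. Hypotheses~(ii) and~(iii) concern only $H$, and are assumed in the statement (and, as noted in the excerpt citing Proposition~4.11 of~\cite{BMV}, are satisfied when $H$ is torsion-free hyperbolic). It then remains to check that the action subgroup $A_G \leqslant Aut(B_n)$ is orbit decidable. Here I would note that $A_G = \{\varphi_g \mid g \in G\}$ is generated, modulo $Inn(B_n)$, by the images $\varphi_{g_1}, \ldots, \varphi_{g_k}$ of a finite generating set of $G$ together with the inner automorphisms of $B_n$; since $B_n$ is finitely generated, $Inn(B_n)$ is finitely generated, so $A_G$ is a finitely generated subgroup of $Aut(B_n)$. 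By Theorem~\ref{od}, every finitely generated subgroup of $Aut(B_n)$ is orbit decidable, hence so is $A_G$.

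With all the hypotheses of Theorem~\ref{ses} verified, that theorem's conclusion gives that the conjugacy problem in $G$ is solvable if and only if $A_G$ is orbit decidable, and we have just shown the latter always holds; therefore $G$ has solvable conjugacy problem. The main (in fact only) genuine obstacle is the orbit decidability of $A_G$, and that has been entirely absorbed into Theorem~\ref{od}, whose proof in turn rests on the structural input $|Out(B_n)| = 2$ from Theorem~\ref{out}: it is this finiteness of $Out(B_n)$ that makes \emph{every} finitely generated — indeed every — subgroup of $Aut(B_n)$ orbit decidable, so no subtlety about which particular subgroup $A_G$ is can arise. Thus the proof is a short assembly of Theorems~\ref{ses}, \ref{tcp}, and~\ref{od}, as the statement already advertises.
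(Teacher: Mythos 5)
Your proposal is correct and is exactly the deduction the paper intends: the paper gives no written proof, simply declaring the theorem an immediate consequence of Theorems~\ref{ses}, \ref{tcp} and \ref{od}, and your verification of the algorithmicity of the sequence, of hypotheses (i)--(iii), and of the finite generation of the action subgroup $A_G$ (so that Theorem~\ref{od} applies) fills in precisely that deduction. (Your parenthetical claim that \emph{every} subgroup of $Aut(B_n)$ is orbit decidable is stronger than needed and would require an effective description of the subgroup to decide whether it lies in $Inn(B_n)$, but this does not affect the argument, which only uses the finitely generated case.)
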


\begin{corollary}
For any $\varphi_1,\ldots ,\varphi_m \in Aut(B_n)$, the group
 $$
\langle B_n, t_1,\ldots ,t_m \mid t_i^{-1}\sigma t_i =\varphi_i(\sigma ) \quad (\sigma \in B_n) \rangle
 $$
has solvable conjugacy problem. $\Box$
\end{corollary}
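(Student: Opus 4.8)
The plan is to recognize the group displayed in the statement as a semidirect product of the type handled by Theorem~\ref{cp}, and then simply invoke that theorem. Write $F_m=\langle t_1,\ldots,t_m\rangle$ for the free group of rank $m$. The defining relations $t_i^{-1}\sigma t_i=\varphi_i(\sigma)$ (for $\sigma\in B_n$) say precisely that each $t_i$ acts on $B_n$ via the automorphism $\varphi_i$, and they impose no relation among the $t_i$ beyond those already holding in $F_m$; hence the given presentation is a presentation of the semidirect product $G=B_n\rtimes F_m$, where the action homomorphism $F_m\to Aut(B_n)$ sends $t_i\mapsto\varphi_i$. In particular the sequence $1\to B_n\to G\to F_m\to 1$ is exact and split, with $H=F_m$ finitely generated.

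Next I would check that $H=F_m$ satisfies conditions (ii) and (iii) preceding Theorem~\ref{ses}. A free group of finite rank is torsion-free hyperbolic, so both properties are covered by the instance of Theorem~\ref{cp} already stated; alternatively one argues directly that free groups have solvable conjugacy problem, and that for $1\neq h\in F_m$ the centralizer $C_{F_m}(h)$ is the infinite cyclic group generated by the (algorithmically computable) primitive root of $h$, so $\langle h\rangle$ has finite index there and a finite transversal is read off at once. With this in hand, Theorem~\ref{cp} applies verbatim and yields that $G$ has solvable conjugacy problem.

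The only points requiring a word of care — and the place I expect the ``work'', such as it is, to sit — are the verifications feeding into Theorem~\ref{ses}: that the short exact sequence is \emph{algorithmic} and that its action subgroup is finitely generated. The sequence is algorithmic because $B_n$, $G$ and $F_m$ are given by finite presentations with all maps determined on generators, a pre-image in $G$ of a word in $F_m$ is read off letter by letter, and a pre-image in $B_n$ of a kernel element of $G$ is obtained by rewriting with the defining relations into a word in the $\sigma_j$ (see Section~2 of~\cite{BMV}). The action subgroup is $A_G=\langle \gamma_{\sigma_1},\ldots,\gamma_{\sigma_{n-1}},\varphi_1,\ldots,\varphi_m\rangle\leqslant Aut(B_n)$, since conjugation by $t_i$ in $G$ induces $\varphi_i$ on $B_n$ and conjugation by $\sigma_j$ induces $\gamma_{\sigma_j}$; this is a finitely generated subgroup of $Aut(B_n)$, so Theorem~\ref{od} shows it is orbit decidable, while Theorem~\ref{tcp} supplies condition (i). Thus every hypothesis of Theorem~\ref{ses} is met, and the conjugacy problem for $G$ is solvable. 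No new mathematical obstacle arises; the argument is entirely a matter of assembling Theorems~\ref{ses}, \ref{tcp} and~\ref{od} through Theorem~\ref{cp}.
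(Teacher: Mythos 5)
Your proof is correct and follows essentially the same route as the paper, which presents this corollary as an immediate consequence of Theorem~\ref{cp} (hence of Theorems~\ref{ses}, \ref{tcp} and~\ref{od}) by recognizing the presented group as $B_n\rtimes F_m$ with $F_m$ free, hence torsion-free hyperbolic. Your extra verifications (that the sequence is algorithmic and that the action subgroup is the finitely generated group $\langle \gamma_{\sigma_1},\ldots,\gamma_{\sigma_{n-1}},\varphi_1,\ldots,\varphi_m\rangle$) are exactly the details the paper leaves implicit, and they are right.
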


\section*{Acknowledgements}

The authors are grateful to the Centre de Recerca Matem\`atica (CRM-Barcelona), since this research was initiated in
the excellent research atmosphere during participation of both authors in one of the CRM workshops. The first named
author acknowledges partial support from the MEC (Spain), Junta de Andaluc\'{\i}a and FEDER, through projects
MTM2007-66929, MTM2010-19355 and P09-FQM-5112. The second author gratefully acknowledges partial support from the MEC
(Spain) and the EFRD (EC) through project number MTM2008-01550.


\begin{thebibliography}{99}

\bibitem{Birman_Ko_Lee} J. S. Birman, K. H. Ko, S. J. Lee, \emph{A new approach to the word and conjugacy problems in the
braid groups}. Adv. Math. \textbf{139} (1998), no. 2, 322–-353.

\bibitem{BMV} O. Bogopolski, A. Martino, E. Ventura, \emph{Orbit decidability and the conjugacy problem for some
extensions of groups}, Transactions of the AMS, \textbf{362}(4) (2010), 2003--2036.

\bibitem{BMMV} O. Bogopolski, A. Martino, O. Maslakova, E. Ventura, \emph{Free-by-cyclic groups have solvable conjugacy
problem}, Bulletin of the London Mathematical Society \textbf{38}(5) (2006), 787--794.

\bibitem{Charney} R. Charney, \emph{Geodesic automation and growth functions for Artin groups of finite type}. Math. Ann.
\textbf{301} (1995), no. 2, 307–324.

\bibitem{DG} J.L. Dyer, E.K. Grossman, \emph{The automorphism groups of the braid groups}, Amer. J. Math.
\textbf{103}(6) (1981), 1151--1169.

\bibitem{EM} E. A. El-Rifai, H. R. Morton. \emph{Algorithms for positive braids},
Quart. J. Math. Oxford Ser. (2) \textbf{45} (1994), no. 180, 479-–497.

\bibitem{Franco_GM} N. Franco, J. Gonz\'alez-Meneses, \emph{Conjugacy problem in braid groups and Garside groups}. J. of
Algebra \textbf{266} (1) 2003, 112--132.

\bibitem{Garside} F. A. Garside, \emph{The braid group and other groups}. Quart. J. Math. Oxford Ser. (2) \textbf{20} 1969
235-–254.

\bibitem{Gebhardt} V. Gebhardt, \emph{A new approach to the conjugacy problem in Garside groups}. J. Algebra \textbf{292}
(2005), no. 1, 282–-302.

\bibitem{Gebhardt_GM} V. Gebhardt, J. Gonz\'alez-Meneses, \emph{The cyclic sliding operation in Garside groups}. Math. Z.
\textbf{265} (1), 2010, 85--114.

\bibitem{GM} J. Gonz\'alez-Meneses, \emph{Improving an algorithm to solve multiple simultaneous conjugacy  problems in
braid groups.} Contemp. Math. \textbf{372}, 2005, 35--42.

\bibitem{Lee_Lee} S. J. Lee and E. Lee, \emph{Potential weaknesses of the commutator key agreement
protocol based on braid groups}. L.R. Knudsen (Ed.): EUROCRYPT 2002, LNCS 2332, pp. 14--28, 2002.

\bibitem{M1} C.F.~Miller III, \emph{On group-theoretic decision problems and their classification}, Annals of Math.
Studies \textbf{68}, (1971).

\bibitem{Reid} K. Reidemeister, \emph{Automorphismen von homotopiekettenringen}, Math. Ann. \textbf{112}
(1936), 586--593.

\bibitem{Thurston} Epstein et al. \emph{Word processing in groups}. Jones and Bartlett Publishers, Boston, MA, 1992.

\end{thebibliography}
\end{document}